\documentclass[a4]{amsart}

\usepackage{amssymb}
\usepackage[all]{xy}
\usepackage{amsmath, amsfonts, amsthm , amssymb} 
\usepackage{mathrsfs}
\usepackage{color}
\usepackage{hyperref}
\usepackage[normalem]{ulem}
\usepackage{colonequals}
\usepackage{comment}

\input xypic 
\usepackage{tikz} 
\usepackage{tikz-cd}
\usepackage{tikz-3dplot}  

\usepackage{romannum}

\newtheorem{theorem}{Theorem}[section]
\newtheorem{proposition}[theorem]{Proposition}
\newtheorem{lemma}[theorem]{Lemma}
\newtheorem{corollary}[theorem]{Corollary}

\theoremstyle{definition}

\newcommand{\sk}{\ensuremath{\mbox{skel}\,}}

\newcommand{\cohlgy}[1]{\ensuremath{H^{*}(#1)}}

\newcounter{bean}

\newcommand{\namedright}[3]{\ensuremath{#1\stackrel{#2}
 {\longrightarrow}#3}}
\newcommand{\nameddright}[5]{\ensuremath{#1\stackrel{#2}
 {\longrightarrow}#3\stackrel{#4}{\longrightarrow}#5}}
\newcommand{\namedddright}[7]{\ensuremath{#1\stackrel{#2}
 {\longrightarrow}#3\stackrel{#4}{\longrightarrow}#5
  \stackrel{#6}{\longrightarrow}#7}} 

\newcommand{\larrow}{\relbar\!\!\relbar\!\!\rightarrow}
\newcommand{\llarrow}{\relbar\!\!\relbar\!\!\larrow}

\newcommand{\lnamedright}[3]{\ensuremath{#1\stackrel{#2}
 {\larrow}#3}}

\newcommand{\lnamedddright}[7]{\ensuremath{#1\stackrel{#2}
 {\larrow}#3\stackrel{#4}{\larrow}#5
  \stackrel{#6}{\larrow}#7}}
\newcommand{\llnamedright}[3]{\ensuremath{#1\stackrel{#2}
 {\llarrow}#3}}
\newcommand{\llnameddright}[5]{\ensuremath{#1\stackrel{#2}
 {\llarrow}#3\stackrel{#4}{\llarrow}#5}}

\newcommand{\Q}{\mathbb{Q}}

\newcommand{\Z}{\mathbb{Z}}

\newif\ifdebug
\debugtrue

\begin{document}
\title[Homotopy Rigidity]{Homotopy rigidity for quasitoric manifolds over a product of simplices}

\author{Xin Fu}
\address{Shanghai Institute for Mathematics and Interdisciplinary Sciences, Shanghai 200433, China} 
\email{x.fu@simis.cn} 
\author{Tseleung So} 
\address{Institute of Mathematical Science, Pusan National University, Busan 46241, Republic of Korea} 
\email{larry.so.tl@gmail.com} 
\author{Jongbaek Song} 
\address{Department of Mathematics Education, Pusan National University, Busan 46241, Republic of Korea} 
\email{jongbaek.song@pusan.ac.kr} 
\author{Stephen Theriault}
\address{Mathematical Sciences, University of Southampton, Southampton SO17 1BJ, UK} 
\email{S.D.Theriault@soton.ac.uk} 

\subjclass[2020]{Primary 55P15, 55P60, 55P62, 57S12.}

\keywords{quasitoric manifold, cohomological rigidity, localization, rational homotopy}

\begin{abstract}
We consider a variation of the Cohomological Rigidity Problem: whether two quasitoric manifolds whose cohomology rings are isomorphic are homotopy equivalent.
We show that if two quasitoric manifolds have 
isomorphic cohomology rings and the associated polytope of one of them is a product of simplices, then they are homotopy equivalent after localizing away from 
an explicit list of primes.
\end{abstract}

\maketitle 

\pagenumbering{arabic}

\section{Introduction} 
\label{sec:intro} 

Let $P$ be a simple polytope of dimension $n$ with $m$ facets. There is an associated moment-angle manifold $\mathcal{Z}_{P}$ with an 
action of the $m$-torus $T^{m}$. If $T^{m-n}$ is a freely acting sub-torus, the quotient 
\[M=\mathcal{Z}_{P}/T^{m-n}\] 
is a \emph{quasitoric manifold}. These manifolds include constructions such as projective spaces, Hirzebruch surfaces and Bott manifolds, and play an important role in symplectic geometry, complex geometry, toric topology and combinatorics.

The Cohomological Rigidity Problem in toric topology~\cite{MS} asks whether two quasitoric manifolds 
with isomorphic cohomology rings are diffeomorphic or homeomorphic. This has been proved in several special cases: 
four-dimensional quasitoric manifolds~\cite{OrRa, Fre}, Bott manifolds~\cite{CHJ}, certain 
generalized Bott manifolds~\cite{CMS-tr}, quasitoric manifolds with the second Betti number equal to~$2$~\cite{CPS}, and 6-dimensional quasitoric manifolds associated with Pogorelov polytopes~\cite{BEMPP} or a $3$-cube~\cite{Ha}. 

Since cohomology is a homotopy invariant, the Cohomological Rigidity Problem may innately be considered with respect to homotopy equivalence.
Rationally, the problem was solved by Panov 
and Ray~\cite{PR}: every quasitoric manifold is \emph{rationally formal}. This result implies that if two quasitoric manifolds $M$ and $N$ have isomorphic cohomology rings, then there is a rational homotopy equivalence $M\overset{\simeq}{\longrightarrow}N$.
On the other hand, a counterexample of quasitoric manifolds with isomorphic integral cohomology rings but different homotopy types has been found~\cite{GL}. It suggests that cohomological rigidity for quasitoric manifolds is a rather special phenomenon, even at the level of homotopy equivalence.

A natural question is therefore whether a rational homotopy equivalence 
can be upgraded to an integral or a local equivalence after inverting finitely many primes.
In~\cite{FSST} the authors gave a positive answer in the special case when 
$M$ and $N$ are $2n$-dimensional quasitoric manifolds associated to the polytope 
$P=\prod^{\ell}_{i=1}\Delta^d$, a product of simplices of the same dimension $d$. In this case, $n=\ell d$ and it was shown that there is a homotopy equivalence $M\simeq N$ after inverting 
all primes $p\leq n-d+1$. This paper uses different methods to improve this result to the case 
when $P$ is any product of simplices.

\begin{theorem}
   \label{main}
   Let $N$ be a $2n$-dimensional quasitoric manifold associated to $P=\prod_{i=1}^{\ell}\Delta^{d_{i}}$ with each $d_{i}\geq 1$. If $M$ is a quasitoric manifold with a graded ring isomorphism $H^{\ast}(M;\Z)\cong H^{\ast}(N;\Z)$, then there is a homotopy equivalence $M\simeq N$ after localizing away from all primes $p\leq n-d$, where~$d=\min\{d_{1},\ldots, d_{\ell}\}$. 
\end{theorem}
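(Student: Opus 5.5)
We build an explicit homotopy equivalence after localization, by first modelling $N$ and then mapping $M$ into that model. Throughout, $P=\prod_{i=1}^{\ell}\Delta^{d_i}$ has $m=n+\ell$ facets, so $H^{\ast}(N;\Z)\cong\Z[x_1,\ldots,x_\ell]/(f_1,\ldots,f_\ell)$. Here each $x_i$ has degree $2$ and each $f_i$ is homogeneous of degree $2(d_i+1)$; concretely $f_i=x_i\prod_{j=1}^{d_i}(x_i+\lambda_{ij})$ for linear forms $\lambda_{ij}$ determined by the characteristic matrix. Since $H^{\ast}(M;\Z)\cong H^{\ast}(N;\Z)$, the manifold $M$ also has second Betti number $\ell$ and is $2n$-dimensional. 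Its cohomology is generated in degree $2$ by $y_1,\ldots,y_\ell$, which are the images of the $x_i$ under the isomorphism and satisfy the same relations.

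\textbf{Step 1 (model for $N$).} A quasitoric manifold over a product of simplices is an iterated bundle of projective spaces; more usefully, it sits in a homotopy fibration
\[
N\longrightarrow \prod_{i=1}^{\ell}\mathbb{C}P^{\infty}\xrightarrow{\ f\ }\prod_{i=1}^{\ell}K(\Z,2d_i+1)\ \ \text{(or its spherical analogue)}.
\]
This comes from $\mathcal{Z}_P=\prod S^{2d_i+1}$ and the Borel construction $N\simeq \mathcal{Z}_P\times_{T^\ell}ET^\ell$. So $N$ is the homotopy fibre of a map $BT^\ell\to \prod_i BS^{2d_i+1}$-type object classifying the sphere bundles. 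More concretely, $N$ is the total space of an iterated pullback of the fibrations $S^{2d_i+1}\to ES^1\times_{S^1}S^{2d_i+1}\to \mathbb{C}P^\infty$, with the $S^1$-actions twisted by the $\lambda_{ij}$. The key point is that $N\to BT^\ell$ is the pullback of the universal bundle, with fibre $\prod S^{2d_i+1}$, along the map to $\prod_i BU(d_i+1)$ given by the vector bundles $\bigoplus_j L_{\lambda_{ij}}\oplus\mathbb{C}$. In particular $N\simeq S(\xi)$ is an iterated fibrewise product of sphere bundles over $BT^\ell$.

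\textbf{Step 2 (map $M\to N$).} Take the classifying map $g\colon M\to BT^\ell$ realising $y_1,\ldots,y_\ell$. To lift $g$ to $N$, we need, for each $i$, a nowhere-zero section of the pulled-back bundle $g^\ast(\bigoplus_j L_{\lambda_{ij}}\oplus\mathbb{C})$ of complex rank $d_i+1$ over the $2n$-dimensional complex $M$. By obstruction theory the obstructions lie in $H^{k+1}(M;\pi_k(S^{2d_i+1}))$ for $2d_i+1\le k\le 2n-1$. The primary obstruction is the top Chern class, i.e.\ the image of $f_i$, which vanishes because $y$ satisfies the relations. The higher obstructions lie in groups built from $\pi_k(S^{2d_i+1})$ with $k-(2d_i+1)\le 2n-2d_i-2$. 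By Serre's theorem, $\pi_{2d_i+1+r}(S^{2d_i+1})$ has no $p$-torsion for $0<r<2p-3$. Hence all higher obstructions vanish $p$-locally when $2n-2d_i-2<2p-3$, which holds whenever $p> n-d_i$. It therefore suffices to invert all $p\le n-d$.

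\textbf{Step 3 (equivalence).} The lift $h\colon M_{(\mathcal P)}\to N_{(\mathcal P)}$ induces on $H^2$ the chosen isomorphism $x_i\mapsto y_i$. Since both cohomology rings are generated by $H^2$, $h^\ast$ is a ring isomorphism. Both spaces are simply connected, so Whitehead's theorem gives a local homotopy equivalence.

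\textbf{Main obstacle.} The difficulty lies in Step 2, in the obstruction theory. Choices made at earlier stages can affect later obstructions, and the sections for the different $i$ must be chosen compatibly in the iterated (twisted) fibration. Rather than tracking each choice, one would handle this by working directly with the $p$-local fibration $\prod_i S^{2d_i+1}_{(p)}\to N\to BT^\ell$ and arguing that, for $p>n-d$, the sphere $S^{2d_i+1}_{(p)}$ agrees with $K(\Z_{(p)},2d_i+1)$ through dimension $2n$. That reduces the lifting problem to the cohomological condition already checked. Care is also needed to confirm that $d_i\ge d$ gives the uniform bound $p>n-d$.
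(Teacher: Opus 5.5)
Your route is genuinely different from the paper's, and once a few slips are repaired it is correct.

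\textbf{Repairs.}
\begin{itemize}
\item In Step~1 the bundle cannot be $\bigoplus_j L_{\lambda_{ij}}\oplus\mathbb{C}$, because a trivial summand kills the Euler class. Over $BT^\ell$ the correct bundle is $E_i=\bigoplus_{j=0}^{d_i}L_{ij}$, with $c_1(L_{ij})$ the facet classes of the $i$th simplex factor; this is your bundle twisted by $L_{x_i}$.
\item You do not actually need the explicit relations. If $q\colon N\to BT^\ell$ is the projection, then $q^*e(E_i)=0$ because $q$ factors through $S(E_i)$. Hence $g^*e(E_i)=\gamma(q^*e(E_i))=0$.
\item The first display with $K(\Z,2d_i+1)$ is wrong, since every map $BT^\ell\to K(\Z,2d_i+1)$ is null. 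Delete it; you never use it.
\item The ``compatibility'' worry in your last paragraph does not arise. $N\simeq ET^\ell\times_{T^\ell}\prod_i S(V_i)$ is the fibre product over $BT^\ell$ of the $S(E_i)$. So a lift of $g$ to $N$ is just an independent lift of $g$ to each $S(E_i)$.
\item Run the obstruction theory on the CW complex $M$ using the fibrewise $\mathcal P$-localized bundles, with fibre $S^{2d_i+1}_{\mathcal P}$. Take $\mathcal P$ to be the set of all primes $p>n-d$ at once, not one prime at a time. Then every group $H^{k+1}(M;\pi_k(S^{2d_i+1})\otimes\Z_{\mathcal P})$ with $2d_i+2\le k\le 2n-1$ vanishes, so earlier choices are irrelevant.
\item This gives a map $M\to N_{\mathcal P}$ lifting $g$, which extends over $M_{\mathcal P}$. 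Step~3 then goes through as written.
\end{itemize}

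\textbf{Comparison with the paper.} The paper uses only the fibration $\mathcal Z_P\to N\to BT^{m-n}$ and the homotopy groups of $\mathcal Z_P$, not the sphere-bundle structure of $N$.
\begin{itemize}
\item It realizes $\gamma\otimes\Q$ by a rational equivalence $e\colon M_0\to N_0$, using Panov--Ray formality of both manifolds.
\item It then inducts up the even skeleta. The obstruction to extending $\Phi_{2k}$ is an element of $\pi_{2k+1}(N)\cong\pi_{2k+1}(\mathcal Z_P)$. This element is rationally trivial because of $e$, hence trivial once the torsion primes of $\pi_{2k+1}(\mathcal Z_P)$ are inverted.
\item The vanishing of $\pi_{2k+2}(\mathcal Z_P)\otimes\Q$ is what preserves $(\Phi_{2k})_0\simeq e_{2k}$ from one stage to the next.
\item The payoff is Theorem~\ref{induction}, valid for any $P$ with the required vanishing of rational even homotopy of $\mathcal Z_P$.
\end{itemize}

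Your argument instead exploits the product-of-simplices case directly. Here $\prod_i S^{2d_i+1}_{\mathcal P}\to\prod_i K(\Z_{\mathcal P},2d_i+1)$ is an isomorphism on $\pi_k$ for $k\le 2n-1$ as soon as $2n-2d_i-2<2p-3$ for all $i$ and all $p\in\mathcal P$. So the only obstructions are the cohomological ones, namely the Euler classes, and the ring isomorphism kills them.

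What your route buys:
\begin{itemize}
\item It dispenses with rational homotopy theory, formality and the skeleton bookkeeping.
\item It realizes $\gamma$ directly and gives the same bound $p\le n-d$.
\item It uses nothing about $M$ beyond its being a simply connected $2n$-dimensional complex with $H^*(M;\Z)\cong H^*(N;\Z)$, so $M$ need not be quasitoric.
\end{itemize}

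The price is generality. For other polytopes, $\pi_{\mathrm{odd}}(\mathcal Z_P)$ has free classes above the bottom cells, and their obstructions are invisible to the cohomology ring. Some replacement for the paper's formality argument would then be needed.
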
 

We note that Theorem \ref{main} admits a refinement (Theorem~\ref{induction}) which establishes a homotopy equivalence $M\simeq N$ after localizing away from a certain finite set of primes. In this more general setting,~$P$ may be any simple polytope satisfying $\pi_{2i}(\mathcal{Z}_P)\otimes\Q=0$ for $i\leq n-1$. In particular, the set of primes to be localized away only depends on $P$. When $P=\prod_{i=1}^{\ell}\Delta^{d_{i}}$ as in Theorem \ref{main},  the list of primes can be made explicit. 

\section*{Acknowledgement}
X.\,Fu is supported by the National Natural Science Foundation of China (grant no. 12501083) and the startup fund of SIMIS. T. So and J. Song are supported by the National Research Foundation of Korea(NRF) grant funded by the Korea government(MSIT) (RS-2025-00555914).

T. So thanks Martin Frankland for discussions on the localization of CW-complexes.

\section{Preliminary information on quasitoric manifolds} 
\label{sec:prelim} 

This section records preliminary facts about quasitoric manifolds that will be used subsequently. 
Throughout, homology and cohomology are taken with integral coefficients unless otherwise indicated.
Let $P$ be an $n$-dimensional simple polytope with $m$ facets. If $M$ 
is a quasitoric manifold associated with $P$ then it is simply-connected, has dimension $2n$, 
and there is a homotopy fibration sequence 
\begin{equation} 
  \label{ZkMfib}
\Omega M\longrightarrow T^{m-n}\overset{\partial}{\longrightarrow}\mathcal{Z}_{P}\longrightarrow{M}\longrightarrow BT^{m-n}
\end{equation}
where $\partial$ is the fibration connecting map.

\begin{lemma} 
   \label{loopMsplit}
   The map $\partial$ is null homotopic.
\end{lemma}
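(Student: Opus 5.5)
The plan is to show that $\partial\colon T^{m-n}\to\mathcal{Z}_P$ is null homotopic by reducing to its restrictions to the circle factors and then using connectivity of $\mathcal{Z}_P$.

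The first step is to record that $\mathcal{Z}_P$ is $2$-connected. This is standard: $\mathcal{Z}_P$ is the polyhedral product $(D^2,S^1)^{K}$, where $K$ is the boundary complex of the dual of $P$. Since $P$ is simple of dimension $n\geq 2$, any two facets of $P$ meet (we assume $P$ is not a polygon degenerate case; for $n=1$, $P=\Delta^1$ and $m-n=1$, $\mathcal{Z}_P=S^3$ is still $2$-connected). Equivalently, $K$ has a complete $1$-skeleton. Then $\mathcal{Z}_P$ is $2$-connected, for example from Hochster's formula or from the cell structure, whose lowest cells beyond the basepoint appear in dimension $3$ and come from missing edges. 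In general $\mathcal{Z}_P$ is always simply connected, so $\pi_1(\mathcal{Z}_P)=0$ suffices for most of the argument, and we only need $\pi_1$ below.

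Next, reduce to circles. The map $\partial$ is the connecting map of the principal fibration $\mathcal{Z}_P\to M$ with fibre $T^{m-n}$, regarded after looping $BT^{m-n}$. Concretely, $\partial$ is the composite
\[\Omega BT^{m-n}\simeq T^{m-n}\longrightarrow \mathcal{Z}_P\]
given by the action of $T^{m-n}$ on a basepoint $x_0\in\mathcal{Z}_P$, namely $t\mapsto t\cdot x_0$. This identification, that the connecting map of a principal $G$-bundle is the orbit map up to homotopy, is the standard one, and I will take it as given.

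The orbit map extends along the action of the full torus: it is the restriction to $T^{m-n}\subset T^m$ of the orbit map $T^m\to\mathcal{Z}_P$. Choose $x_0$ to be the torus-fixed point $(0,\dots,0)\in (D^2)^m$. This is legitimate because $\mathcal{Z}_P\subset (D^2)^m$ contains the origin exactly when the empty face condition holds. In the polyhedral product description, the point with all coordinates $0\in D^2$ lies in $\mathcal{Z}_P$ only if the full vertex set spans a simplex of $K$, which fails.

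So instead take $x_0=(1,\dots,1)$ and use the other route. The orbit map $T^m\to\mathcal{Z}_P$ is the inclusion $(S^1)^m=(S^1,S^1)^K\hookrightarrow (D^2,S^1)^K$. Each coordinate circle $S^1_i$ is contained in $D^2_i\times\prod_{j\neq i}S^1\subset\mathcal{Z}_P$, since $\{i\}$ is a vertex of $K$, and is therefore null homotopic there. Hence the inclusion $T^m\to\mathcal{Z}_P$ is null homotopic on each circle factor.

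The main step, and the expected obstacle, is upgrading "null on each circle factor" to "null on the torus". For this I use that $\mathcal{Z}_P$ is $2$-connected, and more precisely that the inclusion factors through the full polyhedral product. Map $T^m=(S^1)^{m}$ into $\prod_i D^2_i$ and note that the obstruction lies in cells of $T^m$ of dimension $k\geq 2$. Build the null homotopy skeleton by skeleton: cells of dimension $k$ require $\pi_k(\mathcal{Z}_P)=0$ to extend, which fails for general $k$. So I would argue differently. Write $T^{m-n}$'s orbit map as a composite
\[T^{m-n}\xrightarrow{\ \Delta\ }\prod S^1\longrightarrow \mathcal{Z}_P,\]
and use that $\mathcal{Z}_P$ is an H-space up to the relevant range? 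That is false in general.

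The better route is homotopy-theoretic. $\partial$ is a loop map $\Omega BT^{m-n}\to\Omega M$ followed by the identification $\Omega M\to T^{m-n}$ in the fibration; more exactly, the map $T^{m-n}\to\mathcal{Z}_P$ is $\Omega$ of $BT^{m-n}\to$? No, it is the connecting map for $\mathcal{Z}_P\to M\to BT^{m-n}$, which is $\Omega(M\to BT^{m-n})$ up to the fibre inclusion. Since $M\to BT^{m-n}$ induces an isomorphism on $\pi_2$, because $\mathcal{Z}_P$ is $2$-connected, $\Omega M\to T^{m-n}$ is surjective on $\pi_1$. The fibration $\Omega\mathcal{Z}_P\to\Omega M\to T^{m-n}$ admits a section: choose maps $S^1\to\Omega M$ realising the generators of $\pi_1$, and multiply them using the loop structure of $\Omega M$ to get $s\colon T^{m-n}\to\Omega M$ with $\Omega q\circ s\simeq$ a map inducing an isomorphism on $\pi_1$, hence a homotopy equivalence of tori. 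Then $\partial\circ\Omega q\simeq *$, since consecutive maps in a fibration sequence compose trivially, so $\partial\simeq\partial\circ\Omega q\circ s\circ e^{-1}\simeq *$. The delicate point is that $\Omega q$ is an H-map, so that the product of the loop generators maps to the product in $T^{m-n}$.
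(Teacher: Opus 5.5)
Your final argument is the paper's proof: lift generators of $\pi_1(T^{m-n})$ to maps $S^1\to\Omega M$, multiply them with the loop structure to get $s$ such that $\Omega q\circ s$ is a self-equivalence of the torus (since $T^{m-n}$ is a $K(\pi,1)$), and conclude $\partial\simeq\partial\circ\Omega q\circ s\circ(\Omega q\circ s)^{-1}\simeq\ast$. The earlier detours should be deleted; in particular, the claim that any two facets of a simple polytope meet is false (opposite facets of a cube do not), though this is harmless, because $\mathcal{Z}_P$ is always $2$-connected and, as you note, only surjectivity on $\pi_1$, which needs just $\pi_1(\mathcal{Z}_P)=0$, is actually used.
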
 

\begin{proof} 
The space $\mathcal{Z}_{P}$ is $2$-connected~\cite[Proposition 4.3.5]{BP}. Therefore the map 
\(\namedright{M}{}{BT^{m-n}}\) 
induces an isomorphism on $\pi_{2}$, implying that the loop of this map 
\(\namedright{\Omega M}{}{T^{m-n}}\) 
induces an isomorphism on $\pi_{1}$. Each generator in $\pi_{1}(\Omega M)$ is represented 
by a map 
\(\namedright{S^{1}}{}{\Omega M}\), 
and the loop space multiplication allows these to be multiplied together to obtain a map 
\(\namedright{T^{m-n}}{}{\Omega M}\) 
that induces an isomorphism on $\pi_{1}$. Hence the composite 
\(\nameddright{T^{m-n}}{}{\Omega M}{}{T^{m-n}}\) 
induces an isomorphism on $\pi_{1}$ and so is a homotopy equivalence since $T^{m-n}$ is 
an Eilenberg-Mac Lane space. This retraction of $T^{m-n}$ off $\Omega M$ implies that $\partial$ 
is null homotopic. 
\end{proof} 

The cohomology of a quasitoric manifold $M$ was calculated in~\cite{DJ}. They showed that $H^\ast(M)$ is torsion free and there is an isomorphism of graded rings
\[
H^{\ast}(M)\cong\mathbb{Z}[x_{1},\ldots,x_{m}]/\left(\mathcal I+\mathcal J\right)
\] 
where each $x_{i}$ has degree $2$ and $\mathcal{I}$ and $\mathcal{J}$ are certain 
explicitly described ideals. 
In particular,~$H^{\ast}(M)$ 
is generated by elements in degree $2$, so that it is concentrated in even degrees. 
The universal coefficient theorem 
then implies that cohomology is dual to homology, so there is an isomorphism~
$H_{i}(M)\cong H^{i}(M)$ for all $i$. In particular, the homology of $M$ is torsion free and concentrated in even degrees as well. 


We draw two consequences from this homological and cohomological information. 
First, as $M$ has dimension $2n$, its homology
is concentrated in even degrees, from $2$ to $2n$. As $M$ is 
simply-connected and has torsion-free homology, it has a $CW$-structure with one cell corresponding to each homology generator.
Fix 
such a $CW$-structure. For an integer $k\leq n$, let $M(2k)$ be the $2k$-skeleton of $M$. 
Note that $M(0)=\ast$ and $M(2n)=M$. For $0\leq k<n$ there is a homotopy cofibration 
\begin{equation} 
  \label{skelcofib} 
\nameddright{\bigvee_{s=1}^{t_{k}} S^{2k+1}}{f_{k}}{M(2k)}{j_{k}}{M(2k+2)}  
\end{equation}  
where $t_{k}$ is the number of $(2k+2)$-cells in the $CW$-structure for $M$, $f_{k}$ is the attaching 
map for the $(2k+2)$-skeleton and $j_{k}$ is the skeletal inclusion. The following lemma gives 
some information about the homotopy groups of these skeletons.

\begin{lemma} 
   \label{skelconnect} 
   Let $X$ be any $CW$-complex of dimension $d$ where $d\leq 2k+2$. 
   Then the skeletal inclusion  
   \(\namedright{M(2k+2)}{}{M}\) 
   induces a map 
   \(\namedright{[X,{M(2k+2)}]}{}{[X,M]}\) with trivial kernel. 
\end{lemma}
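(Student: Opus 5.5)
The plan is to prove the stronger statement that \(\namedright{[X,M(2k+2)]}{}{[X,M]}\) is injective. Trivial kernel then follows immediately, since the constant map goes to the constant map. The tool is relative cellular approximation, combined with the fact that the fixed $CW$-structure on $M$ has cells only in even dimensions.

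First, I record the key structural fact. The $CW$-structure on $M$ has one cell for each homology generator, and $H_{\ast}(M)$ is concentrated in even degrees. So $M$ has no odd-dimensional cells. In particular, the $(2k+3)$-skeleton of $M$ equals the $(2k+2)$-skeleton $M(2k+2)$, and every cell of $M$ outside $M(2k+2)$ has dimension at least $2k+4$.

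Next, I run the injectivity argument. Take pointed maps \(\namedright{X}{f,g}{M(2k+2)}\) and compose each with the inclusion \(\namedright{M(2k+2)}{i}{M}\). Suppose $i\circ f\simeq i\circ g$ via a pointed homotopy $H\colon X\times I\to M$. I may assume $X$ is a $CW$-complex with a $0$-cell as basepoint. The homotopy then factors through the reduced cylinder $X\wedge I_{+}$, which is a $CW$-complex of dimension at most $d+1\leq 2k+3$. On the subcomplex $X\times\partial I\cup \ast\times I$, the map $H$ is given by $f$, $g$ and the constant map, so it already lands in $M(2k+2)$. Since $f$ and $g$ may be taken cellular by cellular approximation, $H$ is cellular on this subcomplex. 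Relative cellular approximation then deforms $H$, rel this subcomplex, into a cellular map. Its image lies in the $(2k+3)$-skeleton of $M$, which is $M(2k+2)$. This gives a pointed homotopy $f\simeq g$ inside $M(2k+2)$, proving injectivity.

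The only step needing care is the bookkeeping of dimensions. The argument relies on two facts: $\dim(X\times I)\leq 2k+3$, and $M$ has no $(2k+3)$-cells. This is exactly where the hypothesis $d\leq 2k+2$ and the even-cell structure are used. A similar argument applied to $X$ itself (dimension $\leq 2k+2$) would also give surjectivity, so the map is in fact a bijection, though only trivial kernel is claimed.
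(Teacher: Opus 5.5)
Your argument is correct, and it proves more than the paper does, by a different route.

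The paper works with the homotopy fibre $F$ of the skeletal inclusion $M(2k+2)\to M$. Because there are no odd cells, $M(2k+2)$ is also the $(2k+3)$-skeleton, so $F$ is $(2k+2)$-connected and $[X,F]=0$. Trivial kernel is then read off from the exact sequence of pointed sets $[X,F]\to[X,M(2k+2)]\to[X,M]$. Exactness of pointed sets gives only trivial kernel, not injectivity. That is enough for the paper, because it applies the lemma only to $\pi_{2k+1}$, i.e.\ with $X$ a (localized) wedge of $(2k+1)$-spheres. There the homotopy sets are groups, so trivial kernel already means monomorphism.

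Your relative cellular approximation argument uses the same geometric input, the absence of $(2k+3)$-cells. The difference is that it compresses homotopies rather than arguing through a fibre. This gives genuine injectivity for any CW complex $X$ with $\dim X\le 2k+2$. Together with your compression of maps, it gives bijectivity. In effect you have proved the standard compression lemma for the $(2k+3)$-connected pair $(M,M(2k+2))$, without any fibration theory.

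So the paper's version is shorter but yields only the pointed-set statement. Getting injectivity from the fibre sequence for non-co-$H$ spaces $X$ would need extra work. Yours is more elementary and gives the sharper conclusion.

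One cosmetic point: you mention the reduced cylinder $X\wedge I_+$ but then work relative to $X\times\partial I\cup \ast\times I$ in $X\times I$. The latter alone suffices, and it keeps the deformed homotopy pointed.
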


\begin{proof} 
Define the space $F$ by the homotopy fibration 
\[\nameddright{F}{}{M(2k+2)}{}{M}\]  
where the right map is the skeletal inclusion. This induces an exact sequence of homotopy sets 
\begin{equation} 
   \label{Fhtpy} 
   \nameddright{[X,F]}{}{[X,{M(2k+2)}]}{}{[X,M]}.  
\end{equation}  
Since $M$ has homology only in even degrees, $M(2k+2)$ is in fact the $(2k+3)$-skeleton 
of~$M$, implying that $F$ is $(2k+2)$-connected. Since $X$ is $d$-dimensional 
for $d\leq 2k+2$, we obtain $[X,F]\cong 0$ so~
\(\namedright{[X,{M(2k+2)}]}{}{[X,M]}\) 
has a trivial kernel. 
\end{proof} 

Next, there is a result about $2$-skeletons that will be important.

\begin{proposition} \label{skel2rigidity} 
For two quasitoric manifolds $M$ and $N$,
suppose that there is a graded ring isomorphism 
$\gamma \colon H^\ast(N)\to H^\ast(M)$.
Then there is a homotopy equivalence  
$\phi\colon M(2)\to N(2)$
such that $\phi^{\ast}$ equals $\gamma$ on degree~$2$ generators.  
\end{proposition}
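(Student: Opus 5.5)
The plan is to identify the $2$-skeletons explicitly and then realize $\gamma$ by maps of spheres. Since $M$ is simply-connected with torsion-free homology concentrated in even degrees, and the $CW$-structure has one cell per homology generator, the $2$-skeleton $M(2)$ has no $1$-cells and $t_{0}=\operatorname{rank} H^{2}(M)$ two-cells. In the cofibration~(\ref{skelcofib}) with $k=0$ the attaching map $f_{0}$ goes to $M(0)=\ast$, so $M(2)\simeq\bigvee_{s=1}^{t_{0}}S^{2}$. The same holds for $N$. Because $\gamma$ is a graded ring isomorphism, it restricts to an isomorphism $H^{2}(N)\cong H^{2}(M)$ of free abelian groups, so the ranks agree. (Equivalently, $m-n$ is the same for both, since $H^{2}\cong\Z^{m-n}$ by Lemma~\ref{loopMsplit} and the fibration~(\ref{ZkMfib}).)

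Next I would construct $\phi$. Choose bases so that $M(2)=\bigvee_{s=1}^{r}S^{2}_{s}$ and $N(2)=\bigvee_{s=1}^{r}S^{2}_{s}$, and identify $H^{2}(M(2))\cong H^{2}(M)$ and $H^{2}(N(2))\cong H^{2}(N)$. These identifications hold because the skeletal inclusion induces an isomorphism on $H^{2}$: the next cells are $4$-cells, and there are no $3$-cells. Dualize $\gamma$ in degree $2$ to obtain an integer matrix $A\in GL_{r}(\Z)$ giving the required map $H_{2}(M(2))\to H_{2}(N(2))$. Define $\phi$ on the $s$-th sphere of $M(2)$ as the element $\sum_{j}a_{js}\iota_{j}\in\pi_{2}(\bigvee S^{2})\cong\Z^{r}$, where $\iota_j$ is the inclusion of the $j$-th sphere. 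This uses the Hurewicz isomorphism for the simply-connected wedge and the fact that a map out of a wedge is determined by its restrictions.

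Then $\phi_{\ast}=A$ on $H_{2}$, so $\phi^{\ast}$ agrees with $\gamma$ on degree $2$ generators by universal coefficients, since everything is torsion free. Finally, $\phi$ induces an isomorphism on integral homology between simply-connected $CW$-complexes, so by Whitehead's theorem it is a homotopy equivalence.

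There is no real obstacle here. The only point needing care is the bookkeeping: checking that $\gamma$ is compatible with the skeletal identifications, and getting the direction of dualization right, since $\gamma$ goes from $N$ to $M$ while $\phi$ goes from $M$ to $N$.
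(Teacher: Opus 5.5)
Your proposal is correct and follows essentially the same route as the paper. You identify both $2$-skeletons with a wedge of $2$-spheres, realize the dual of $\gamma$ on $H_{2}$ by a map of wedges via the Hurewicz isomorphism, and dualize back; your extra bookkeeping (the identification $H^{2}(M)\cong H^{2}(M(2))$ and the appeal to Whitehead's theorem) simply makes explicit what the paper leaves implicit.
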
  

\begin{proof} 
Since there are additive isomorphisms $H_{\ast}(M)\cong H^{\ast}(M)$ and $H_{\ast}(N)\cong H^{\ast}(N)$, the multiplicative isomorphism 
$H^{\ast}(M)\cong H^{\ast}(N)$ implies that there is an isomorphism $H_{\ast}(M)\cong H_{\ast}(N)$ of graded abelian groups. In particular, $M$ and $N$ are simply-connected and both have $2$-skeleton homotopy equivalent to $\bigvee_{i=1}^{s} S^{2}$ for some number $s$.
The Hurewicz Isomorphism implies that any isomorphism of $H_{\ast}(\bigvee_{i=1}^{s} S^{2})$ 
can be realized as a self-homotopy equivalence of $\bigvee_{i=1}^{s} S^{2}$. Thus in 
our case the isomorphism $H_{\ast}(M(2))\cong H_{\ast}(N(2))$ induced 
by the dual of $\gamma$ implies that there is a homotopy equivalence  
 \(\phi\colon\namedright{M(2)}{}{N(2)}\) 
such that $\phi_{\ast}$ equals the dual of $\gamma$ on degree~$2$ generators. Dualizing again, 
$\phi^{\ast}$ equals $\gamma$ on degree~$2$ generators. 
\end{proof} 

\section{$\mathcal{P}$-localization and $CW_{\mathcal{P}}$ complexes}
\label{sec:Plocalization}

Let $\mathcal{P}$ be a set of primes. The  ring of $\mathcal{P}$-local integers is
\[
\Z_{\mathcal{P}}=\left\{\frac{a}{b}\in\Q\mid \forall p\in\mathcal{P},\  \gcd(b,p)=1\right\}.
\]
A simply-connected space $X$ is \emph{$\mathcal{P}$-local} if one of the following equivalent conditions hold:
\begin{itemize}
\item
$\pi_*(X)\to\pi_*(X)\otimes_{\Z}\Z_{\mathcal{P}}$ is an isomorphism;

\item
$\tilde{H}_*(X;\Z)\to\tilde{H}_*(X;\Z)\otimes_{\Z}\Z_{\mathcal{P}}$ is an isomorphism. 
\end{itemize}
A map $f\colon X\to Y$ between simply-connected spaces is a \emph{$\mathcal{P}$-localization} if $Y$ is $\mathcal{P}$-local and
\[
f_*\otimes\Z_{\mathcal{P}}\colon\pi_*(X)\otimes\Z_{\mathcal{P}}\to\pi_*(Y)\otimes\Z_{\mathcal{P}}\cong\pi_*(Y)
\]
is an isomorphism.
When $\mathcal{P}$ is empty, $\Z_{\mathcal{P}}=\Q$ and a $\mathcal{P}$-localization is called a \emph{rationalization}.

A simply-connected CW complex admits a $\mathcal{P}$-localization by attaching additional cells, called the \emph{cellular localization}. This construction has the advantage of the resulting $\mathcal{P}$-local space being a $CW_{\mathcal{P}}$ complex in the
sense of~\cite[Chapter~9]{FHT}; see also \cite[Definition~1.3]{Hess} for the case of rationalization. 
 This is defined as follows.

Let $\{k_1,k_2,\ldots\}$ be the set of positive integers coprime to all primes in $\mathcal{P}$.
For any $n\geq1$, the~\emph{$\mathcal{P}$-local $n$-sphere} is the infinite telescope
\[
S^n_{\mathcal{P}}=\mbox{hocolim}(S^n\overset{k_1}{\longrightarrow}S^n\overset{k_2}{\longrightarrow}\cdots),
\]
where $S^n\xrightarrow{k_i} S^n$ is the degree-$k_i$ map, and the \emph{$\mathcal{P}$-local $(n+1)$-disk} is its cone
\[
D^{n+1}_{\mathcal{P}}=S^n_{\mathcal{P}}\times I/S^n_{\mathcal{P}}\times\{0\}.
\]
A \emph{$CW_\mathcal{P}$ complex} $X$ is a space with a filtration $X=\bigcup^{\infty}_{i=0}X(i)$ such that
\begin{itemize}
\item
$X(0)=pt$,
\item for $n\geq 0$,
$X(n+1)=X(n)\cup_{f_n}(\bigsqcup_{\alpha}D^{n+1}_{\mathcal{P},\alpha})$ where $f_n\colon\bigsqcup_{\alpha}S^n_{\mathcal{P},\alpha}\to X(n)$ is a cellular map.
\end{itemize}

One can show that $CW_{\mathcal{P}}$ complexes are $\mathcal{P}$-local.
The following lemma asserts that a simply-connected CW complex admits a cellular localization and summarizes the properties needed for our later arguments.

\begin{lemma}[\cite{FHT}, Chapter 9b]\label{lem:CW0 filtration} 
Let $X$ be a simply-connected CW complex. Then there is a relative CW-complex $(X_{\mathcal{P}},X)$ with no zero-cells and no one-cells such that:
\begin{enumerate}
\item $X_{\mathcal{P}}=\bigcup^{\infty}_{i=0}X_{\mathcal{P}}(i)$ is a $CW_{\mathcal{P}}$ complex and the inclusion $l\colon X\to X_{\mathcal{P}}$ is a $\mathcal{P}$-localization;
\item the inclusion $X(k)\longrightarrow X_{\mathcal{P}}(k)$ is a $\mathcal{P}$-localization for $ k\geq1$;
\item if $X$ has no $(k+1)$-cells, then
$X_{\mathcal{P}}(k+1)=X_{\mathcal{P}}(k)$;
\item the $k$-skeleton of $X_{\mathcal{P}}$ is a subcomplex of $X_{\mathcal{P}}(k)$, and $X_{\mathcal{P}}(k)$ is a subcomplex of the $(k+1)$-skeleton of $X_{\mathcal{P}}$.
\end{enumerate}
Moreover, for a $\mathcal{P}$-local simply-connected space $Y$, any map $f\colon X\to Y$ extends to a map $g\colon X_{\mathcal{P}}\to Y$ whose homotopy class is determined by the homotopy class of $f$.
\qed
\end{lemma}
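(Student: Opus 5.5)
We would prove this by constructing $X_{\mathcal{P}}$ one stage of the filtration at a time. At each stage we localize the attaching maps of $X$ and cone them off with $\mathcal{P}$-local disks. Replacing $X$ up to homotopy if necessary, we take $X$ to have a single $0$-cell and no $1$-cells, and we put $X_{\mathcal{P}}(0)=X_{\mathcal{P}}(1)=\ast$.

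\emph{Inductive construction.} Suppose $X_{\mathcal{P}}(k)$ has been built with the following properties:
\begin{itemize}
\item it is a simply-connected $CW_{\mathcal{P}}$ complex;
\item it contains $X(k)$ as a subcomplex, with complement a relative $CW$-complex having no $0$- or $1$-cells;
\item the inclusion $X(k)\to X_{\mathcal{P}}(k)$ is a $\mathcal{P}$-localization.
\end{itemize}
Write the telescope $S^k_{\mathcal{P}}$ as a $CW$-complex whose first copy of $S^k$ is a subcomplex $\iota\colon S^k\to S^k_{\mathcal{P}}$. Then $S^k_{\mathcal{P}}$ is obtained from $\iota(S^k)$ by attaching $k$- and $(k+1)$-cells (mapping cylinders of the degree-$k_i$ maps).

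Let $g\colon S^k\to X(k)\subset X_{\mathcal{P}}(k)$ be the attaching map of a $(k+1)$-cell of $X$. Since $\pi_k(X_{\mathcal{P}}(k))$ is a $\mathcal{P}$-local group, the class of $g$ is uniquely divisible by every $k_i$. This gives compatible maps on each stage of the telescope, and hence an extension $g'\colon S^k_{\mathcal{P}}\to X_{\mathcal{P}}(k)$ with $g'\iota=g$. We make $g'$ cellular by cellular approximation, rel $\iota(S^k)$.

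We then set $X_{\mathcal{P}}(k+1)=X_{\mathcal{P}}(k)\cup_{g'}\bigsqcup D^{k+1}_{\mathcal{P}}$. The ordinary cone on $\iota(S^k)$ sits inside the cone on $S^k_{\mathcal{P}}$, so $X(k+1)$ is a subcomplex. This gives the relative $CW$ structure with no $0$- or $1$-cells. It also gives properties (3) and (4) directly: if there are no $(k+1)$-cells nothing is attached, and the new cells have dimension $k+1$ or $k+2$.

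\emph{Localization property (2).} We compare the long exact homology sequences of the pairs $(X(k+1),X(k))$ and $(X_{\mathcal{P}}(k+1),X_{\mathcal{P}}(k))$. The relative groups are $\bigoplus\Z$ and $\bigoplus\tilde H_*(\Sigma S^k_{\mathcal{P}})=\bigoplus\Z_{\mathcal{P}}$, both in degree $k+1$. So the map between them is exactly $-\otimes\Z_{\mathcal{P}}$. By induction the map on $H_*(X(k))$ is also $-\otimes\Z_{\mathcal{P}}$. Since $\Z_{\mathcal{P}}$ is flat, the five lemma shows that $\tilde H_*(X(k+1))\otimes\Z_{\mathcal{P}}\to\tilde H_*(X_{\mathcal{P}}(k+1))$ is an isomorphism and that the target is $\mathcal{P}$-local. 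Both spaces are simply connected, by van Kampen since no $1$-cells or $2$-dimensional $\mathcal{P}$-spheres with $\pi_1$ are attached. The homology characterization of $\mathcal{P}$-local spaces and of $\mathcal{P}$-localization (Serre class theory for simply-connected spaces) therefore gives (2).

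\emph{Property (1).} We take the union over all $k$. Homology commutes with the colimit, so $X\to X_{\mathcal{P}}$ is a homology $\mathcal{P}$-localization between simply-connected spaces, which gives (1).

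\emph{Universal property.} Let $Y$ be $\mathcal{P}$-local and simply connected, and let $f\colon X\to Y$. We extend $f$ over $X_{\mathcal{P}}$ by induction on the filtration: at each stage we must extend over the new cones $D^{k+1}_{\mathcal{P}}$. Because $Y$ is $\mathcal{P}$-local, restriction gives $[S^k_{\mathcal{P}},Y]\cong\pi_k(Y)\otimes\Z_{\mathcal{P}}\cong\pi_k(Y)$. Hence a map on $S^k_{\mathcal{P}}$ is null exactly when its restriction to $\iota(S^k)$ is null, and that restriction is null because $f$ already extends over the ordinary disk.

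For uniqueness we use the obstruction-theoretic argument with cohomology of the pair $(X_{\mathcal{P}},X)$. Equivalently, $f$ induces a map $X_{\mathcal{P}}\to Y_{\mathcal{P}}\simeq Y$, and two extensions agree because both are $\mathcal{P}$-localizations of the same map.

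\emph{Expected main obstacle.} The delicate step is this uniqueness, and the extension over the infinite telescopes. Here one must control a $\lim^1$ term, $\lim^1[\Sigma S^k,Y]$ along the multiplication-by-$k_i$ tower. It vanishes because the tower consists of isomorphisms of the $\mathcal{P}$-local groups $\pi_{k+1}(Y)$, so it satisfies Mittag-Leffler. With this in hand, the homotopy class of the extension is determined by that of $f$.
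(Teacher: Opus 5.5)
Your construction of $X_{\mathcal{P}}$ and your verification of (1)--(4) follow the cellular-localization argument of \cite[Chapter~9]{FHT}, which the paper cites without proof, carried over from rational to $\mathcal{P}$-local spheres. That part is fine. The step that does not work as written is \emph{existence} in the universal property.

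At stage $k+1$ you only show that $F_k\circ g'$ is null on $S^k_{\mathcal{P}}$. This lets $F_k$ extend over $D^{k+1}_{\mathcal{P}}$ in \emph{some} way, but nothing makes the extension agree with $f$ on the ordinary disk $D^{k+1}\subset D^{k+1}_{\mathcal{P}}$. That agreement is exactly what you use at the next stage, to conclude that $F_{k+1}$ composed with an attaching map $S^{k+1}\to X(k+1)$ is null. So the inductive hypothesis is not maintained, and the induction can genuinely break. Take $X=\mathbb{C}P^2=S^2\cup_{\eta}e^4$, $Y=S^2_{\mathcal{P}}$ and $f$ constant. Any extension of the constant map over $D^2_{\mathcal{P}}$ is permitted, and $X_{\mathcal{P}}(2)\simeq S^2_{\mathcal{P}}$, so your recipe allows $F_2\colon X_{\mathcal{P}}(2)\to S^2_{\mathcal{P}}$ to be a homotopy equivalence. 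Since $X_{\mathcal{P}}(3)=X_{\mathcal{P}}(2)$, the obstruction at stage $4$ is $F_2\circ g'_4\circ\iota=F_2\circ\eta$. This is nonzero in $\pi_3(S^2_{\mathcal{P}})\cong\Z_{\mathcal{P}}$, so there is no extension over $D^4_{\mathcal{P}}$.

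The missing ingredient is that the extension can be chosen rel $D^{k+1}$. For this you need the restriction $[S^j_{\mathcal{P}},Y]\to\pi_j(Y)$ to be bijective for $j=k+1$ as well as for $j=k$; your Milnor-sequence computation gives the case $j=k+1$ verbatim. The Puppe sequence of $S^k\xrightarrow{\iota}S^k_{\mathcal{P}}\to S^k_{\mathcal{P}}/\iota(S^k)$ then shows $[S^k_{\mathcal{P}}/\iota(S^k),Y]=\ast$. Hence the map already defined on $S^k_{\mathcal{P}}\cup_{\iota}D^{k+1}\simeq S^k_{\mathcal{P}}/\iota(S^k)$ is null, and it extends over the contractible $D^{k+1}_{\mathcal{P}}$.

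Alternatively, use for existence the same obstruction theory you invoke for uniqueness. By (1) and flatness, $H_*(X_{\mathcal{P}},X)\otimes\Z_{\mathcal{P}}=0$. Each $\pi_j(Y)$ is a $\Z_{\mathcal{P}}$-module, so $\mathrm{Hom}$ and $\mathrm{Ext}$ from $H_*(X_{\mathcal{P}},X)$ into it vanish, and hence $H^*(X_{\mathcal{P}},X;\pi_*(Y))=0$. This kills at once all obstructions to extending $f$ rel $X$ and all difference classes between two extensions, with no $\lim^1$ bookkeeping.

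A minor point: you should not replace $X$ up to homotopy, since the lemma concerns a relative complex containing the given $X$. Instead, note that (2) for $k=1$, together with $X_{\mathcal{P}}(0)=\ast$, implicitly requires $X$ to have one $0$-cell and no $1$-cells. This holds for the cell structures on $M$ and $N$ used in the paper.
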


Note that cellular localization is compatible with the inclusion relation of the prime sets.
Let $\mathcal{P}$ and $\mathcal{P}'$ be sets of primes such that $\mathcal{P}\subset\mathcal{P}'$.
Then $\Z_{\mathcal{P}'}$ is a subring of $\Z_{\mathcal{P}}$.
For each $k\geq 1$, $X_{\mathcal{P}'}(k)$ is a subcomplex of $X_{\mathcal{P}}(k)$ and the inclusion $\varphi_k\colon X_{\mathcal{P}'}(k)\to X_{\mathcal{P}}(k)$
is a $\mathcal{P}$-localization. 
Moreover, any map $f\colon X_{\mathcal{P}'}(k)\to Y_{\mathcal{P}'}(k)$ induces a map $f_{\mathcal{P}}\colon X_{\mathcal{P}}(k)\to Y_{\mathcal{P}}(k)$ satisfying
a homotopy commutative diagram
\[
\xymatrix{
X_{\mathcal{P}'}(k)\ar[r]^-{f}\ar[d]_-{\varphi_k}	&Y_{\mathcal{P}'}(k)\ar[d]^-{\varphi_k}\\
X_{\mathcal{P}}(k)\ar[r]^-{f_{\mathcal{P}}}		&Y_{\mathcal{P}}(k).
}
\]
The homotopy class of $f_{\mathcal{P}}$ is uniquely determined by the homotopy class of $f$.

When $\mathcal{P}=\emptyset$, we denote the cellular rationalization $X_{\mathcal{P}}$ and its filtration $X_{\mathcal{P}}(k)$ by $X_0$ and $X_0(k)$ respectively.
In the rest of the paper, all $\mathcal{P}$-localizations and rationalizations are assumed to be the cellular localizations in Lemma~\ref{lem:CW0 filtration} unless otherwise stated.

\section{Rational information for quasitoric manifolds} 
\label{sec:rational} 

This section establishes some properties about the rational homotopy theory of quasitoric 
manifolds that will be important in the next section. A standard reference for rational homotopy theory is~\cite{FHT}. A commutative differential 
graded algebra is abbreviated to CDGA. A map of CDGAs that induces an isomorphism in cohomology is called a quasi-isomorphism and is denoted by $\simeq$. 
Let~$(A,d)$ be a CDGA over the ground field $\mathbb{Q}$. If 
$H^{0}(A,d)\cong\mathbb{Q}$ and $H^{1}(A,d)=0$
then there is a minimal CDGA $(\Lambda V,d)$ and a quasi-isomorphism 
\(\namedright{(\Lambda V,d)}{\simeq}{(A,d)}\). 
This CDGA together with the quasi-isomorphism is called a minimal model for $(A,d)$, 
and it is unique up to CDGA isomorphism. The minimal model of a simply-connected space $X$ is the minimal model of $A_{PL}(X)$, where $A_{PL}(-)$ is Sullivan's functor from the category 
of simply-connected spaces and continuous maps to the category of CDGAs. 

Following~\cite[Definition 2.85]{FOT}, a simply-connected space $X$ with minimal model 
$(\Lambda V,d)$ is \emph{rationally formal} if there is a quasi-isomorphism 
\[\namedright{(\Lambda V,d)}{\simeq}{(H^{\ast}(X;\mathbb{Q}),0)},\] 
where $(H^{\ast}(X;\mathbb{Q}),0)$ is the CDGA whose underlying commutative graded algebra is $H^{\ast}(X;\mathbb{Q})$ and whose differential is identically zero. In what follows, this CDGA will be abbreviated to $H^{\ast}(X;\mathbb{Q})$. 
The relevant example in our case is that quasitoric manifolds are rationally formal, as 
proved by Panov and Ray~\cite[Corollary 7.2]{PR}. This is strengthened as follows.

\begin{proposition}\label{lemma realizing Gamma} 
Let $M$ and $N$ be quasitoric manifolds and let $l_M\colon M\to M_0$ and $l_N\colon N\to N_0$ be their rationalizations. For any morphism of graded algebras $\gamma\colon\namedright{\cohlgy{N;\mathbb{Q}}}{}{\cohlgy{M;\mathbb{Q}}}$, there is a map $e\colon M_0\to N_0$  satisfying a commutative diagram

\[
\xymatrix{
H^*(N_0;\Q)\ar[r]^-{e^*}\ar[d]_-{l_N^*}	&H^*(M_0;\Q)\ar[d]^-{l_M^*}\\
H^*(N;\Q)\ar[r]^-{\gamma}		&H^*(M;\Q).
}
\]
\end{proposition}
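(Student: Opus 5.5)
The plan is to combine the rational formality of quasitoric manifolds (Panov--Ray) with the Sullivan correspondence between maps into rational spaces and CDGA morphisms. Let $(\Lambda V_N,d)$ and $(\Lambda V_M,d)$ be minimal models of $N$ and $M$. By~\cite[Corollary 7.2]{PR} there are quasi-isomorphisms
\[
\psi_N\colon(\Lambda V_N,d)\overset{\simeq}{\longrightarrow}H^*(N;\Q),\qquad \psi_M\colon(\Lambda V_M,d)\overset{\simeq}{\longrightarrow}H^*(M;\Q),
\]
and, by definition of the minimal model, quasi-isomorphisms $m_N\colon(\Lambda V_N,d)\to A_{PL}(N)$ and $m_M\colon(\Lambda V_M,d)\to A_{PL}(M)$. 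Since $\gamma$ is a morphism of graded algebras and both targets have zero differential, $\gamma$ is a CDGA map $H^*(N;\Q)\to H^*(M;\Q)$. The composite $\gamma\circ\psi_N\colon(\Lambda V_N,d)\to H^*(M;\Q)$ lifts through the surjective quasi-isomorphism problem: by the lifting lemma for Sullivan algebras, since $\psi_M$ is a quasi-isomorphism and $(\Lambda V_N,d)$ is minimal, hence cofibrant, there is a CDGA map $\varphi\colon(\Lambda V_N,d)\to(\Lambda V_M,d)$ with $\psi_M\circ\varphi\simeq\gamma\circ\psi_N$. In particular $H(\psi_M)\circ H(\varphi)=\gamma\circ H(\psi_N)$ on cohomology.

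Next I would realize $\varphi$ geometrically. The spatial realization functor $\langle-\rangle$ gives a map $\langle\varphi\rangle\colon\langle\Lambda V_M\rangle\to\langle\Lambda V_N\rangle$, and for a simply-connected space of finite type, $\langle\Lambda V_N\rangle$ is a rationalization of $N$, via the adjoint of $m_N$; similarly for $M$. Since $M_0$ and $N_0$ are also rationalizations, with the cellular ones of Lemma~\ref{lem:CW0 filtration}, there are homotopy equivalences $M_0\simeq\langle\Lambda V_M\rangle$ and $N_0\simeq\langle\Lambda V_N\rangle$ under $M$ and $N$, by the universal property in Lemma~\ref{lem:CW0 filtration}. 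Define $e$ as the composite $M_0\simeq\langle\Lambda V_M\rangle\xrightarrow{\langle\varphi\rangle}\langle\Lambda V_N\rangle\simeq N_0$.

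To check the diagram, I identify $H^*(M_0;\Q)$ with $H(\Lambda V_M)$, using that $l_M^*$ is an isomorphism on rational cohomology. Under this identification $l_M^*$ corresponds to $H(m_M)$, and $e^*$ corresponds to $H(\varphi)$, since the unit $\Lambda V\to A_{PL}\langle\Lambda V\rangle$ is natural and a quasi-isomorphism. The issue is that the diagram in the statement uses $l^*$, that is $H(m)$, not $H(\psi)$. So I must choose $\psi_N$ and $\psi_M$ compatibly, in the sense that $H(\psi_N)=H(m_N)$ as maps $H(\Lambda V_N)\to H^*(N;\Q)$, identifying $H(A_{PL}(N))=H^*(N;\Q)$. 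This is arranged by postcomposing $\psi_N$ with the graded algebra automorphism $H(m_N)\circ H(\psi_N)^{-1}$ of $H^*(N;\Q)$, which is still a CDGA map since the differential is zero; likewise for $M$. Then $l_M^*e^*=H(m_M)H(\varphi)=H(\psi_M)H(\varphi)=\gamma H(\psi_N)=\gamma\, l_N^*$, as required.

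I expect the main obstacle to be this bookkeeping. One must make sure that the rationalization $M_0$ of Lemma~\ref{lem:CW0 filtration} agrees with Sullivan's realization compatibly with the maps from $M$, and that the formality quasi-isomorphism induces the canonical identification on cohomology. The renormalization by an automorphism above is the key trick that handles the second point.
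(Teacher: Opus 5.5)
Your proposal is correct and follows essentially the paper's route. You use minimal models with Panov--Ray formality, lift through the formality quasi-isomorphism via the lifting lemma, take the spatial realization, and compare with the cellular rationalization through the universal property. Your renormalization of the formality quasi-isomorphisms by the automorphism $H(m)\circ H(\psi)^{-1}$ is exactly what the paper does when it replaces $\gamma$ by the conjugated map $\gamma'$ before lifting.
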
 

\begin{proof} 
The proof proceeds in two steps.

\noindent 
\textit{Step 1}: 
We prove that there exist rationalizations $h_M\colon M\to M_{\Q}$ and $h_N\colon N\to N_{\Q}$, and a map~$\tilde{e}\colon M_{\Q}\to N_{\Q}$ 
satisfying a commutative diagram 
\begin{equation} 
\label{hrdgrm} 
\xymatrix{
H^*(N_{\Q};\Q)\ar[r]^-{\tilde{e}^*}\ar[d]_-{h_N^*}	&H^*(M_{\Q};\Q)\ar[d]^-{h_M^*}\\
H^*(N;\Q)\ar[r]^-{\gamma}					&H^*(M;\Q).
}
\end{equation} 
Here $M_{\Q}$ and $N_{\Q}$ may not be rational CW complexes, and $h_M$ and $h_N$ are not necessarily the cellular localizations described in Lemma~\ref{lem:CW0 filtration}.

Let $\theta_N\colon(\Lambda V_N,d)\to A_{PL}(N)$ be a minimal Sullivan model. The rational formality of $N$ implies that there is a zig-zag of quasi-isomorphisms
\[
H^*(N)\overset{\epsilon_N}{\longleftarrow}(\Lambda V_N,d)\overset{\theta_N}{\longrightarrow}A_{PL}(N).
\]
Recall that the Sullivan realization of $(\Lambda V_N,d)$ is the simplicial set $\langle\Lambda V_N,d\rangle_{\bullet}$ given by
\[
\langle\Lambda V_N,d)\rangle_i=\text{Hom}_{DGA}((\Lambda V_N,d), (A_{PL})_i),
\] 
where $DGA$ is the category of differential graded algebras and $A_{PL}$ is the polynomial differential form.
Since $(\Lambda V_N,d)$ is the minimal model of $A_{PL}(N)$, by~\cite[Theorem 17.12]{FHT}, the geometric realization $|\Lambda V_N,d|=|\langle \Lambda V_N,d\rangle_{\bullet}|$ gives rise to a rationalization of $N$
\[
h_N\colon N\overset{\simeq}{\longrightarrow}|Sing_{\bullet}({N})|\longrightarrow|\Lambda V_N,d|.
\] 
Similarly, take a minimal model $(\Lambda V_M,d)$ of $M$ and construct the rationalization $h_M\colon M\to|\Lambda V_M,d|$.

We construct the map $\tilde{e}\colon|\Lambda V_M,d|\to|\Lambda V_N,d|$ as follows.
Let $\psi_N\colon A_{PL}(N)\to C^*(N)$ and $\psi_M\colon A_{PL}(M)\to C^*(M)$ be the natural quasi-isomorphisms given in~\cite[Theorem 10.15]{FHT},
and for any morphism of graded $\Q$-algebras $\gamma\colon H^*(N;\Q)\to H^*(M;\Q)$, let $\gamma'$ be the composite
\[
H^*(N;\Q) 
\xrightarrow{\psi_N^*\circ\theta^*_N\circ(\epsilon^*_N)^{-1}}
H^*(N;\Q)\overset{\gamma}{\longrightarrow}H^*(M;\Q) 
\xrightarrow{\epsilon^*_M\circ(\theta^*_M)^{-1}\circ(\psi^*_M)^{-1}}
H^*(M;\Q).
\]
By~\cite[Theorem 12.9]{FHT} there is a lift $\varphi$ of $\gamma'\circ\epsilon_N$ through $\epsilon_M$
\begin{equation}\label{eq_diag_M_H}
\xymatrix{  
       (\Lambda V_N,d)\ar@{-->}[r]^-{\varphi}\ar[d]^{\epsilon_{N}} & (\Lambda V_M,d)\ar[d]^{\epsilon_{M}} \\ 
       H^{\ast}(N;\Q)\ar[r]^-{\gamma'}					&H^{\ast}(M;\Q).
  }
\end{equation}
Take $\tilde{e}=|\langle\varphi\rangle|$.
We need to show that it satisfies the commutative diagram~(\ref{hrdgrm}).

Consider the diagram of DGAs
\begin{equation}
  \spreaddiagramcolumns{1pc}
  \diagram 
  & H^*(N;\Q)\rto^-{\gamma'} & H^*(M;\Q)&\\
 &   (\Lambda V_N,d)\rto^-{\varphi}\dto^{\eta_{N}}
 \ar@/_0.7pc/[dl]_-{\theta_N}
 \uto^{\epsilon_N} & (\Lambda V_M,d)\uto_-{\epsilon_M}\dto^{\eta_{M}}\ar@/^0.7pc/[dr]^-{\theta_M}
 \\ 
 A_{PL}(N)\dto_-{\psi_N} &     A_{PL}(\vert\Lambda V_N,d\vert)\rto^-{A_{PL}(\tilde{e})} \lto_-{A_{PL}(h_N)}
        \dto  & A_{PL}(\vert\Lambda V_M,d\vert)\dto\rto^-{A_{PL}(h_M)}
  &A_{PL}(M) \dto^-{\psi_M}     
        \\
C^*(N) &    C^*(|\Lambda V_N,d|)\rto_-{C^*(\tilde{e})}\lto^-{C^*(h_N)} &C^*(|\Lambda V_M,d|)\rto_-{C^*(h_M)} & C^*(M).
 \enddiagram 
\end{equation} 
where $\eta_N,\eta_M$ are the quasi-isomorphisms given in~\cite[Theorem 17.10]{FHT}.
The top square is Diagram~\eqref{eq_diag_M_H}, the two triangles are homotopy commutative
due to~\cite[Theorem 17.12]{FHT}, the middle square is homotopy commutative
by~\cite[Diagram (17.14)]{FHT}, and the remaining squares commute due to functoriality. 
Taking cohomology, the outer perimeter implies that there is a commutative diagram 
\[
\xymatrix{
H^*(N;\Q)\ar[rrrr]^-{\psi^*_M\circ\theta_M^*\circ(\epsilon^*_M)^{-1}\circ\gamma'\circ\epsilon^*_N\circ(\theta^*_N)^{-1}\circ(\psi^*_N)^{-1}}	&&&&H^*(M;\Q)\\
H^*(|\Lambda V_N,d|;\Q)\ar[rrrr]^-{\tilde{e}^*}\ar[u]^-{h_N^*}							&&&&H^*(|\Lambda V_M,d|;\Q)\ar[u]_-{h_M^*}.
}
\]
Observe that the top row equals $\gamma$ by definition of~$\gamma'$.
This proves the existence of~(\ref{hrdgrm}).
\smallskip 

\noindent 
\textit{Step 2}:
We prove the commutativity of the diagram asserted in the statement of the proposition. 
The universal property of
rationalization yields homotopy equivalences
\[
g_M\colon |\Lambda V_M,d|\longrightarrow M_{0}
\quad\text{and}\quad
g_N\colon |\Lambda V_N,d|\longrightarrow N_0
\]
such that $l_M\simeq g_M\circ h_M$ and $l_N\simeq g_N\circ h_N$.
Define $e\colon M_0\to N_0$ as the composite
\[
M_0\overset{g_M^{-1}}{\longrightarrow}|\Lambda V_M,d|\overset{\tilde{e}}{\longrightarrow}|\Lambda V_N,d|\overset{g_N}{\longrightarrow}N_0.
\] 
Then there is homotopy commutative diagram
\[
\xymatrix{
M\ar[r]^-{h_M}\ar@{=}[d]   &|\Lambda V_M,d|\ar[r]^-{\tilde{e}}\ar[d]^-{g_M}   &|\Lambda V_N,d|\ar[d]\ar[d]^-{g_N}   &N\ar[l]_-{h_N}\ar@{=}[d]\\
M\ar[r]^-{l_M}   &M_0\ar[r]^-{e}   &N_0   &N\ar[l]_-{l_N}
}
\]
The zig-zags from $M$ to $N$ in the top and the bottom rows induce the same cohomology homomorphisms. Using
(\ref{hrdgrm}) we obtain
$l^*_M\circ e^*\circ (l^*_N)^{-1}=h_M^*\circ\tilde{e}^*\circ(h_N^*)^{-1}=\gamma$, 
as required.
\end{proof}

Let $\mathcal{P}$ be a set of primes. Apply the $\mathcal{P}$-localization $M_{\mathcal{P}}=\bigcup^{\infty}_{i=0}M_{\mathcal{P}}(i)$ in Lemma~\ref{lem:CW0 filtration} to a quasitoric manifold $M$. Since $M$ is $2n$-dimensional and consists of even-dimensional cells only, for each $k\geq 1$ we have $M_{\mathcal{P}}(2k+1)=M_{\mathcal{P}}(2k)$ and $M_{\mathcal{P}}(2n)=M_{\mathcal{P}}$.
The same is true for the~$\mathcal{P}$-localization~$N_{\mathcal{P}}=\bigcup^{\infty}_{i=0}N_{\mathcal{P}}(i)$ of $N$. 

\begin{lemma}\label{lem:restriction}
Suppose that $e\colon M_0\longrightarrow N_0$
is a homotopy equivalence.
Then there exist maps
\begin{equation}\label{eq_e_2k}
e_{2k}\colon M_0(2k)\longrightarrow N_0(2k),
\qquad
1\le k\le n,
\end{equation}
fitting into a homotopy commutative diagram
\begin{equation} 
\label{eladder 2}\spreaddiagramcolumns{-0.5pc}  
\diagram 
M_0(2)\rto\dto^{e_{2}} & M_{0}(4)\rto\dto^{e_4} & \cdots\rto & M_{0}(2k-2)\rto\dto^{ e_{2k-2}}
& M_{0}(2k)\rto\dto^{e_{2k}} & \cdots\rto & M_0\dto^{e} \\ 
N_0(2)\rto & N_{0}(4)\rto & \cdots\rto & N_{0}(2k-2)\rto & N_{0}(2k)\rto & \cdots\rto & N_0. 
  \enddiagram 
\end{equation}
Moreover, each $e_{2k}$ is a homotopy equivalence.
\end{lemma}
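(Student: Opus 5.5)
Since $M_0$ and $N_0$ are $CW_{\emptyset}$ complexes built from rational cells only in even dimensions (by the remarks before the lemma, $M_0(2k+1)=M_0(2k)$ and $M_0(2n)=M_0$), I plan to use cellular approximation in the $CW_\emptyset$ setting. By Lemma~\ref{lem:CW0 filtration}(4), the $(2k+1)$-skeleton of $M_0$ is contained in $M_0(2k+1)=M_0(2k)$, which is in turn contained in the $(2k+1)$-skeleton of $M_0$. So $M_0(2k)$ is precisely the $(2k+1)$-skeleton of the CW complex $M_0$, and the same holds for $N_0$. Cellular approximation then lets us homotope $e$ to a cellular map. Such a map sends the $(2k+1)$-skeleton of $M_0$ into the $(2k+1)$-skeleton of $N_0$, that is, $M_0(2k)$ into $N_0(2k)$. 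Define $e_{2k}$ to be the restriction. The ladder~\eqref{eladder 2} then commutes strictly for the cellular representative, and hence up to homotopy for $e$.

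Next I would show each $e_{2k}$ is a homotopy equivalence. The spaces are simply connected (there are no $1$-cells), so by Whitehead it suffices to show that $e_{2k}$ induces an isomorphism on $H_*(-;\Q)$; all these spaces are rational, so this is equivalent to an isomorphism on rational homology, and integral homology equals rational homology here. By Lemma~\ref{lem:CW0 filtration}(2), $M(2k)\to M_0(2k)$ is a rationalization. Hence $H_*(M_0(2k))\cong H_*(M(2k);\Q)$, which is $H_*(M;\Q)$ in degrees $\le 2k$ and zero above. Moreover, the inclusion $M_0(2k)\to M_0$ induces an isomorphism on $H_i$ for $i\le 2k$, and $H_i(M_0(2k))=0$ for $i>2k$; the same holds for $N$. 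Since $e$ induces isomorphisms on all homology groups, commutativity of the square
\[
\xymatrix{
M_0(2k)\ar[r]\ar[d]_-{e_{2k}} & M_0\ar[d]^-{e}\\
N_0(2k)\ar[r] & N_0
}
\]
shows that $(e_{2k})_*$ is an isomorphism in degrees $\le 2k$. Above degree $2k$ both groups vanish, so $(e_{2k})_*$ is trivially an isomorphism there as well.

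The main obstacle I expect is justifying cellular approximation for $CW_{\mathcal P}$ complexes, together with the skeleton identification. I would first try to use Lemma~\ref{lem:CW0 filtration}(4), which treats $M_0$ as an honest relative CW complex, so that ordinary cellular approximation applies and the sandwich argument above works. If that identification proves delicate, the fallback is obstruction theory. The fibre of $N_0(2k)\to N_0$ is $2k+1$-connected (by the same parity argument as in Lemma~\ref{skelconnect}, since $N_0$ has no odd rational cells). Since $M_0(2k)$ is built from rational cells of dimension $\le 2k$, it has the homotopy type of a $(2k+1)$-dimensional complex. The obstructions to lifting $e|_{M_0(2k)}$ through $N_0(2k)$ therefore vanish, which gives $e_{2k}$ with the squares commuting up to homotopy.
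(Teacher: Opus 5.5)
Your proposal is correct and is essentially the paper's proof. Cellular approximation of $e$, together with Lemma~\ref{lem:CW0 filtration}(4) and $N_0(2k+1)=N_0(2k)$, restricts $e$ to $e_{2k}\colon M_0(2k)\to N_0(2k)$; your two-sided sandwich identifying $M_0(2k)$ with the $(2k+1)$-skeleton slightly sharpens the paper's one-sided inclusions, and you then give the same rational homology comparison via the square into $e\colon M_0\to N_0$ followed by Whitehead's theorem. The obstruction-theoretic fallback is unnecessary and, as written, overstates connectivity: the fibre of $N_0(2k)\to N_0$ is only $2k$-connected, since its $\pi_{2k+1}\cong\pi_{2k+2}(N_0,N_0(2k))\cong H_{2k+2}(N;\Q)$ is nonzero in general, though this still suffices to lift from a $(2k+1)$-dimensional source (the ladder squares would then also need the bijection $[M_0(2k-2),N_0(2k)]\cong[M_0(2k-2),N_0]$).
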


\begin{proof}
Let $\sk_i(M_0)$ denote the $i$-skeleton of $M_0$.
Applying the Cellular Approximation Theorem to the map
$e\colon M_0\longrightarrow N_0$
gives a homotopy commutative diagram
\begin{equation} 
\label{eladder}\spreaddiagramcolumns{-0.5pc}  
\diagram 
\sk_{2}(M_0)\rto\dto^{ \tilde e_{2}} & \sk_{3}(M_{0})\rto\dto^{\tilde e_{3}} & \cdots\rto & \sk_{k}(M_{0})\rto\dto^{\tilde e_{k}} 
& \sk_{k+1}(M_{0})\rto\dto^{\tilde e_{k+1}} & \cdots\rto & M_0\dto^{e} \\ 
\sk_{2}(N_0)\rto & \sk_{3}(N_{0})\rto & \cdots\rto & \sk_{k}(N_{0})\rto & \sk_{k+1}(N_{0})\rto & \cdots\rto & N_0
  \enddiagram 
\end{equation}  
whose skeletal restrictions define the maps $\tilde{e}_{k}$.

By Lemma~\ref{lem:CW0 filtration}, we have $M_0(2k)\subset \sk_{2k+1}(M_0)$ and $\sk_{2k+1}(N_0)\subset N_0(2k+1)$. Hence the restriction of $\tilde{e}_{2k+1}$ to $M_{0}(2k)$ defines a map
\[e_{2k}\colon M_0(2k) \to N_0(2k+1).\]
Since $N$ has no $(2k+1)$-cells, we have $N_0(2k+1)=N_0(2k)$. Hence $e_{2k}$ is a map $M_0(2k) \to N_0(2k)$ for all $k\geq 1$.
Restricting Diagram~\eqref{eladder} in this way yields the
homotopy commutative diagram~\eqref{eladder 2}.

It remains to prove that each $e_{2k}$ is a homotopy equivalence. 
For each $k$, the square
\begin{equation}\label{eq_htpy_square_inclusion}
\begin{tikzcd}
M_0(2k)
\arrow[r,hook]
\arrow[d,"e_{2k}"']
&
M_0
\arrow[d,"e"]
\\
N_0(2k)
\arrow[r,hook]
&
N_0
\end{tikzcd}
\end{equation}
is homotopy commutative.
By Lemma~\ref{lem:CW0 filtration}, the spaces $M_0(2k)$ and $N_0(2k)$
are rationalizations of~$\sk_{2k} (M)$ and $\sk_{2k} (N)$, respectively.
Since $M$ and $N$ have no $(2k+1)$-cells, inclusions
\[\text{$M_0(2k)\to M_0$ and $N_0(2k)\to N_0$}
\]
induce isomorphisms on rational homology in every degree $\leq 2k$.
Since $e$ is a rational homotopy equivalence, it induces an isomorphism on rational homology. The homotopy commutativity of \eqref{eq_htpy_square_inclusion} therefore
implies that
\[
(e_{2k})_*\colon
H_i\bigl(M_0(2k);\mathbb Q\bigr)
\longrightarrow
H_i\bigl(N_0(2k);\mathbb Q\bigr)
\]
is an isomorphism for every $i\leq 2k$.

Finally, since $M_0(2k)$ and $N_0(2k)$ have rational cells only in dimensions at most $2k$, their rational homology vanishes in degrees greater than $2k$. Hence $(e_{2k})_*$ is an isomorphism in every degree. As~$M_0(2k)$ and $N_0(2k)$ are simply-connected, Whitehead's Theorem implies that $e_{2k}$ is a rational homotopy equivalence. Since $M_0(2k)$ and $N_0(2k)$ are rational spaces, $e_{2k}$ itself is a homotopy equivalence.
\end{proof}

By Lemma~\ref{lem:restriction} there is a collection of rational equivalences between filtrations of $M_0$ and $N_0$.
We ask whether there exists a collection of $\mathcal{P}$-local equivalences $M_{\mathcal{P}}(k)\to N_{\mathcal{P}}(k)$. The following lemma gives a necessary condition. 

Recall from the end of Section~\ref{sec:Plocalization} that cellular localization is compatible with the inclusion relation of prime sets. In particular, if $\mathcal{P}$ is a set of primes then $M_{\mathcal{P}}(2k)$ is a subcomplex of $M_{0}(2k)$ and the inclusion 
\(\lnamedright{M_{\mathcal{P}}(2k)}{\varphi_{2k}}{M_{0}(2k)}\) 
is a rationalization, and similarly for 
\(\lnamedright{N_{\mathcal{P}}(2k)}{\varphi_{2k}}{N_{0}(2k)}\).

\begin{lemma}\label{rationalextension} 
Let $\mathcal{P}$ be a set of primes and fix an integer $k$ such that $1\leq k< n$. Suppose that~\(\namedright{M_{\mathcal{P}}(2k)}{h}{N_{\mathcal{P}}(2k)}\)   
   is a map such that there is a homotopy $h_0\simeq e_{2k}$ for $e_{2k}$ as in Lemma~\ref{lem:restriction}, and that the polytope $P$ associated to $N$ satisfies~$\pi_{2k+2}(\mathcal{Z}_P)\otimes\Q=0$.
   If the composite 
   \[
   M_{\mathcal{P}}(2k)\overset{h}{\longrightarrow}
   N_{\mathcal{P}}(2k)\overset{j_{k}(N)}{\longrightarrow}N_{\mathcal{P}}(2k+2)\overset{\varphi_{2k+2}}{\longrightarrow}N_{0}(2k+2)
   \] 
   extends across $\varphi_{2k+2}\circ j_{k}(M)$ to give a homotopy commutative diagram
   \[\diagram 
         M_{\mathcal{P}}(2k)\rto^-{j_{k}(M)}\dto^{h} &M_{\mathcal{P}}(2k+2)\rto^-{\varphi_{2k+2}} & M_0(2k+2)\ar@{-->}[d]^{h'} \\    
         N_{\mathcal{P}}(2k)\rto^-{j_{k}(N)} &N_{\mathcal{P}}(2k+2)\rto^-{\varphi_{2k+2}}	&N_0(2k+2)   
      \enddiagram\]
      for some map $h'$, then there is a homotopy $h'\simeq e_{2k+2}$. 
\end{lemma}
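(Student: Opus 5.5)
The idea is to show that $h'$ and $e_{2k+2}$ agree up to homotopy on the subcomplex $M_0(2k)$. Their possible difference is then measured by an element of a homotopy group of $N_0(2k+2)$. The hypothesis on $\mathcal{Z}_P$ forces that group to vanish.

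First, compare the two maps on $M_0(2k)$. The rationalization $\varphi_{2k}\colon M_{\mathcal{P}}(2k)\to M_0(2k)$ is compatible with the skeletal inclusions, i.e.\ $\varphi_{2k+2}\circ j_k(M)=j_k(M_0)\circ\varphi_{2k}$, and similarly for $N$. So the hypothesised square says that
\[
h'\circ j_k(M_0)\circ\varphi_{2k}\simeq j_k(N_0)\circ\varphi_{2k}\circ h\simeq j_k(N_0)\circ h_0\circ\varphi_{2k},
\]
where the last homotopy is the defining property of $h_0$. Since $N_0(2k+2)$ is rational, the uniqueness part of Lemma~\ref{lem:CW0 filtration} gives
\[
h'\circ j_k(M_0)\simeq j_k(N_0)\circ h_0\simeq j_k(N_0)\circ e_{2k}.
\]
By the ladder~\eqref{eladder 2} of Lemma~\ref{lem:restriction}, the right-hand side is $e_{2k+2}\circ j_k(M_0)$. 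Hence $h'$ and $e_{2k+2}$ restrict to homotopic maps on $M_0(2k)$. By homotopy extension we may deform $h'$ so that the two maps agree on $M_0(2k)$ on the nose.

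Next, use the cofibration $M_0(2k)\to M_0(2k+2)\to\bigvee S^{2k+2}_{\mathcal{\emptyset}}$. This is the rationalization of~\eqref{skelcofib}, obtained by attaching rational $(2k+2)$-cells. Two maps out of $M_0(2k+2)$ that agree on $M_0(2k)$ differ by the coaction of an obstruction class
\[
d\in\Bigl[\bigvee S^{2k+2}_0,\,N_0(2k+2)\Bigr]\cong\prod\pi_{2k+2}(N_0(2k+2)).
\]
Moreover $h'\simeq e_{2k+2}$ once $d$ is trivial. It therefore suffices to show that $\pi_{2k+2}(N_0(2k+2))=0$.

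The main step is this computation. By Lemma~\ref{lem:CW0 filtration}(4), $N_0(2k+2)$ contains the $(2k+2)$-skeleton of $N_0$. Since $N$ has no odd cells, $N_0(2k+3)=N_0(2k+2)$, so $N_0(2k+2)$ also contains the $(2k+3)$-skeleton of $N_0$. Hence the inclusion $N_0(2k+2)\to N_0$ is $(2k+3)$-connected, and $\pi_{2k+2}(N_0(2k+2))\cong\pi_{2k+2}(N_0)\cong\pi_{2k+2}(N)\otimes\Q$. This connectivity argument is the same one used in Lemma~\ref{skelconnect}. Finally, the fibration~\eqref{ZkMfib} gives $\pi_{2k+2}(N)\cong\pi_{2k+2}(\mathcal{Z}_P)$ because $2k+2\geq 4>2$ and $BT^{m-n}$ has homotopy only in degree $2$. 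Tensoring with $\Q$ and using the hypothesis $\pi_{2k+2}(\mathcal{Z}_P)\otimes\Q=0$, we get $\pi_{2k+2}(N_0(2k+2))=0$. So $d$ is trivial and $h'\simeq e_{2k+2}$.

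The step needing the most care is the skeletal identification, namely that $N_0(2k+2)$ really contains the $(2k+3)$-skeleton of $N_0$. Attention is also needed in setting up the difference-class argument for rational cells rather than ordinary cells. The homotopy extension property still holds for the relative $CW$ pair $(M_0(2k+2),M_0(2k))$, so the standard obstruction theory should go through unchanged.
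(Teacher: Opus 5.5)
Your proposal is correct and is essentially the paper's argument. It uses the same three ingredients: the vanishing $\pi_{2k+2}(N_0(2k+2))\cong\pi_{2k+2}(\mathcal{Z}_P)\otimes\mathbb{Q}=0$ (because the cells of $N_0$ outside $N_0(2k+2)$ have dimension $\geq 2k+4$), the coaction argument showing that restriction along the skeletal inclusion is injective, and the identification of $e_{2k+2}$ as an extension via the rational skeletal ladder and $h_0\simeq e_{2k}$. The only difference is the order of steps: you first descend to $M_0(2k)$ by the universal property of rationalization and then run the coaction argument on the rational cofibration $M_0(2k)\to M_0(2k+2)$, whereas the paper runs it on the $\mathcal{P}$-local cofibration $M_{\mathcal{P}}(2k)\to M_{\mathcal{P}}(2k+2)$ with target $N_0(2k+2)$ and then transfers uniqueness of the extension to $M_0(2k+2)$.
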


\begin{proof} 
Mapping the homotopy cofibration sequence  
\[\namedddright{\bigvee_{s=1}^{t_{k}} S^{2k+1}_{\mathcal{P}}}{f_{k}}{M_{\mathcal{P}}(2k)}{j_{k}(M)}{M_{\mathcal{P}}(2k+2)}{\delta_{k}}{\bigvee_{s=1}^{t_{k}} S^{2k+2}_{\mathcal{P}}}\]  
to $N_{0}(2k+2)$ gives an exact sequence of pointed sets 
\begin{multline*}
     \big[ \bigvee_{s=1}^{t_{k}} S^{2k+2}_{\mathcal{P}},N_{0}(2k+2)\big] \xrightarrow{\delta_k^\ast} [M_{\mathcal{P}}(2k+2),N_{0}(2k+2)] \\
      \xrightarrow{j_k(M)^\ast} [M_{\mathcal{P}}(2k),N_{0}(2k+2)] \xrightarrow{f_k^\ast} \big[\bigvee_{s=1}^{t_{k}} S^{2k+1}_{\mathcal{P}},N_{0}(2k+2)\big].
\end{multline*}

First, consider $[\bigvee_{s=1}^{t_{k}} S^{2k+2}_{\mathcal{P}},N_{0}(2k+2)]$. 
By~\eqref{ZkMfib}, there is a homotopy fibration $\mathcal{Z}_P\to N\to BT^{\ell}$. Rationally, this gives a homotopy fibration  
$(\mathcal{Z}_{P})_{0}\to N_0\to BT^{\ell}_0$. 
As $BT^{\ell}_0$ is the Eilenberg-Mac Lane space $K(\mathbb{Q}^{\ell},2)$, for $k\geq 1$ there is an isomorphism 
$\pi_{2k+2}((\mathcal{Z}_{P})_{0})\cong\pi_{2k+2}(N_{0})$. 
By hypothesis,~$\pi_{2k+2}((\mathcal{Z}_{P})_0)\cong 0$, implying 
that $\pi_{2k+2}(N_0)\cong 0$. Further, 
since $N_{0}$ is obtained from $N_{0}(2k+2)$ by attaching cells of dimensions~$\geq 2k+4$
the inclusion $N_{0}(2k+2)\to N_0$ induces an isomorphism~$\pi_{2k+2}(N_{0}(2k+2))\cong \pi_{2k+2}(N_0).$
Thus $\pi_{2k+2}(N_0(2k+2))\cong 0$. Hence 
$[\bigvee_{s=1}^{t_{k}} S^{2k+2}_{\mathcal{P}},N_{0}(2k+2)]\cong 0$, 
implying that $\delta_{k}^{\ast}$ is the zero map. 
Therefore $j_k(M)^*$ is injective by \cite[Theorem~4.4.8]{arkowitz}. 

Next, the existence of an extension of $\varphi_{2k+2}\circ j_{k}(N)\circ h$ across $j_{k}(M)$ is 
equivalent to the homotopy class of $\varphi_{2k+2}\circ j_{k}(N)\circ h$ lifting through $j_{k}(M)^{\ast}$. Since 
$j_{k}(M)^{\ast}$ is an injection, if such a lift exists it is unique up to homotopy. Moreover, since $N_{0}(2k+2)$ is a rational space, the universal property of rationalizations implies that any map 
$M_{\mathcal{P}}(2k+2)\stackrel{g}{\longrightarrow} N_{0}(2k+2)$ factors as a composite 
$M_{\mathcal{P}}(2k+2)\stackrel{\varphi_{2k+2}}{\llarrow} M_{0}(2k+2)\stackrel{g'}{\longrightarrow} N_0(2k+2)$, 
where $g'$ is the rationalization of $g$. Hence if $\varphi_{2k+2}\circ j_{k}(N)\circ h$ extends across $j_{k}(M)$, then it extends across $\varphi_{2k+2}\circ j_{k}(M)$, and does so in a unique way up to homotopy. 

Finally, consider the diagram 
\[\diagram 
     M_{\mathcal{P}}(2k)\rto^-{\varphi_{2k}}\dto^{h} &M_{0}(2k)\rto\dto^{e_{2k}} & M_0(2k+2)\dto^{e_{2k+2}} \\    
     N_{\mathcal{P}}(2k)\rto^-{\varphi_{2k}} &N_{0}(2k)\rto	&N_0(2k+2)   
  \enddiagram\]
The left square homotopy commutes since, by hypothesis, $h_{0}\simeq e_{2k}$. The right square homotopy commutes by Lemma~\ref{lem:restriction}. 
Observe that by rationalizing maps in a different order the top row is 
homotopic to the composite 
\(\llnameddright{M_{\mathcal{P}}(2k)}{j_{k}(M)}{M_{\mathcal{P}}(2k+2)}{\varphi_{2k+2}}{M_{0}(2k+2)}\) 
and similarly the bottom row is homotopic to $\varphi_{2k+2}\circ j_{k}(N)$. 
Thus we obtain $e_{2k+2}\circ\varphi_{2k+2}\circ j_{k}(M)\simeq\varphi_{2k+2}\circ j_{k}(N)\circ h$. Therefore, if there is an 
extension $h'$ of $\varphi_{2k+2}\circ j_{k}(N)\circ h$ across $\varphi_{2k+2}\circ j_{k}(M)$ as in the statement of the lemma, then the uniqueness result for such an extension proved in the previous paragraph implies that $h'\simeq e_{2k+2}$. 
\end{proof}

\section{The proof of Theorem~\ref{main}} 
\label{sec:proof} 
The results in this section will be stated for any simple polytope until we reach the proof 
of Theorem~\ref{main}. Let $M$ and $N$ be two quasitoric manifolds.
Suppose that there is an 
isomorphism of integral cohomology rings 
\begin{equation}\label{eq_gamma_integral_iso}
\gamma\colon\namedright{H^{\ast}(N)}{}{H^{\ast}(M)}.
\end{equation}
By Proposition~\ref{lemma realizing Gamma}, there is a homotopy equivalence 
\(e\colon\namedright{M_0}{}{N_0}\), together with cellular rationalizations $l_M\colon M\to M_0$ and $l_N\colon N\to N_0$, such that $l_M^*\circ e^*\circ (l_N^*)^{-1}=\gamma\otimes\mathbb{Q}.$
As in Lemma~\ref{lem:restriction}, for~$1\leq k\leq n$, let 
\(e_{2k}\colon\namedright{M_0(2k)}{}{N_0(2k)}\) 
be the restriction of $e$ to rational $2k$-skeletons. 

We wish to show that the isomorphism $\gamma$ implies a stronger 
result, that there is a local homotopy equivalence $M\simeq N$ after inverting finitely many primes.  
To be more precise, the strategy to prove Theorem~\ref{main} is to show that each homotopy equivalence of rational skeletons $e_{2k}\colon M_{0}(2k)\to N_0(2k)$ can be lifted to a $\mathcal{P}_k$-local equivalence $\Phi_{2k}\colon M_{\mathcal{P}_k}(2k)\to N_{\mathcal{P}_k}(2k)$ by induction on $k$
for some decreasing sequence $\mathcal{P}_1\supset\cdots\supset\mathcal{P}_n$ of sets of primes.
In particular, when $k=n$ we obtain a $\mathcal{P}_n$-local equivalence $\Phi_{2n}\colon M_{\mathcal{P}_n}\to N_{\mathcal{P}_n}$.

To begin, we show that $e_2\colon M_0(2)\to N_0(2)$ is the rationalization of an integral homotopy equivalence between the $2$-skeletons of $M$ and $N$. 

\begin{lemma} 
   \label{skel2QZ} 
There is a homotopy equivalence $\Phi_2\colon M(2)\to N(2)$ such that $(\Phi_2)_0\simeq e_{2}$. 
\end{lemma}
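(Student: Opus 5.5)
Take $\Phi_2$ to be the homotopy equivalence $\phi\colon M(2)\to N(2)$ produced by Proposition~\ref{skel2rigidity} from the integral isomorphism $\gamma$ in~\eqref{eq_gamma_integral_iso}. By construction, $\phi^\ast$ agrees with $\gamma$ on degree~$2$ generators. It remains to check that $(\Phi_2)_0\simeq e_2$ as maps $M_0(2)\to N_0(2)$.

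Since $M(2)\simeq\bigvee_{i=1}^{s}S^2$, its cellular rationalization is $M_0(2)\simeq\bigvee_{i=1}^{s}S^2_0$, and likewise for $N$. Homotopy classes of maps $\bigvee S^2_0\to\bigvee S^2_0$ are classified by their effect on $\pi_2\otimes\Q\cong H_2(-;\Q)$, or dually on $H^2(-;\Q)$. This holds because $[\bigvee S^2_0, Y]\cong\prod\pi_2(Y)$ for a rational, simply-connected $Y$, and by Hurewicz $\pi_2(N_0(2))\cong H_2(N_0(2);\Q)$. So the whole problem reduces to showing that $(\Phi_2)_0$ and $e_2$ induce the same homomorphism $H^2(N_0(2);\Q)\to H^2(M_0(2);\Q)$.

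To compare them, use the restriction isomorphisms in degree $2$. The inclusions $M(2)\to M$ and $M_0(2)\to M_0$ induce isomorphisms on $H^2$, as do the rationalizations. Using the homotopy commutative square~\eqref{eq_htpy_square_inclusion} for $k=1$ together with the identity $l_M^\ast\circ e^\ast\circ(l_N^\ast)^{-1}=\gamma\otimes\Q$ from Proposition~\ref{lemma realizing Gamma}, we find that $e_2^\ast$ corresponds, under these identifications, to $\gamma\otimes\Q$ restricted to $H^2$. On the other side, $(\Phi_2)_0^\ast$ corresponds to $\phi^\ast\otimes\Q$, which is also $\gamma\otimes\Q$ on $H^2$. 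The two maps therefore induce the same map on $H^2$ and hence are homotopic.

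The main obstacle is bookkeeping the naturality of these identifications. One must check that the square formed by $M(2)\to M_0(2)$, $M\to M_0$ and the inclusions commutes; this follows from Lemma~\ref{lem:CW0 filtration}, since cellular localization of a subcomplex is a subcomplex of the localization. One must also check that the degree~$2$ restriction maps are isomorphisms. This holds because $M$ has no $3$-cells, so $H^2(M)\to H^2(M(2))$ is an isomorphism, and rationally the same is true for $M_0(2)\subset M_0$.
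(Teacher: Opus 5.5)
Your proposal is correct and follows essentially the same route as the paper: take $\Phi_2=\phi$ from Proposition~\ref{skel2rigidity}, then use that maps out of $M_0(2)\simeq\bigvee S^2_0$ are detected by $\pi_2\cong H_2(-;\Q)$, so agreement of $(\Phi_2)_0^\ast$ and $e_2^\ast$ with $\gamma\otimes\Q$ on $H^2$ forces $(\Phi_2)_0\simeq e_2$. The only differences are cosmetic: the paper forms the co-$H$ difference $\phi_0-e_2$ and shows it is null, whereas you use injectivity of $[\bigvee S^2_0,Y]\to\prod\pi_2(Y)$ directly, and your explicit check via the square~\eqref{eq_htpy_square_inclusion} that $e_2^\ast$ equals $\gamma\otimes\Q$ in degree~$2$ supplies a step the paper leaves implicit.
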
 

\begin{proof} 
By Proposition~\ref{skel2rigidity} there exists a homotopy equivalence $\phi\colon M(2)\to N(2)$ such that~$\phi^*$ equals the isomorphism $\gamma$ on degree two cohomology. Let $\phi_0$ be its rationalization. 

With respect to the $CW$-structure on $M$ discussed in Section \ref{sec:prelim}, it follows that $M(2)$ is homotopy equivalent to a finite wedge of $2$-spheres.
Thus $M_0(2)\simeq\bigvee_iS^2_{0}$, implying that it is a co-$H$-space, 
so we may take the difference 
\(\llnamedright{M_0(2)}{\phi_0-e_{2}}{N_0(2)}\). 
We will show that $\phi_0-e_{2}$ is null homotopic.

Since $M_0(2)\simeq\bigvee_iS^2_{0}$, we have $[M_0(2),-]\cong\bigoplus_i\pi_2(-)\otimes\Q$.
Consider the commutative diagram
\[
\xymatrix{
[M_0(2),M_0(2)]\ar[r]^-{\cong}\ar[d]^-{(\phi_0-e_2)_*}	&\bigoplus_i\pi_2(M_0(2))\otimes\Q\ar[r]^-{\cong}\ar[d]^-{(\phi_0-e_2)_*}	&\bigoplus_iH_2(M_0(2);\Q)\ar[d]^-{(\phi_0-e_2)_*}\\
[M_0(2),N_0(2)]\ar[r]^-{\cong}							&\bigoplus_i\pi_2(N_0(2))\otimes\Q\ar[r]^-{\cong}							&\bigoplus_iH_2(N_0(2);\Q)
}
\]
where the horizontal isomorphisms in the right square are rational Hurewicz isomorphisms; see~\cite[Lemma 8.7]{FHT}.
The right vertical map $(\phi_0-e_2)_*$ is trivial since its dual 
\[(\phi_0-e_2)^*\colon H^2(N_0(2);\Q)\to H^2(M_0(2);\Q)\] is trivial because $\phi_0^*$ and $e_2^*$ both equal $\gamma\otimes\mathbb{Q}$ in degree two cohomology. Therefore the left vertical map $(\phi_0-e_2)_*$ is also trivial, implying that $\phi_0-e_2$ is null homotopic. Thus $\phi_{0}\simeq e_{2}$, so taking~$\Phi_{2}=\phi$ proves the lemma.
\end{proof}

Next we prove the inductive case. Let $P$ be the simple polytope associated with $N$. By
~(\ref{ZkMfib}) there is a homotopy fibration  
\(\nameddright{\mathcal{Z}_{P}}{}{N}{}{BT^{m-n}}\), 
implying that $\pi_{\ell}(N)\cong\pi_{\ell}(\mathcal{Z}_{P})$ for all $\ell>2$. This lets us phrase homotopy group hypotheses for $N$ in terms of $\mathcal{Z}_{P}$. As $\mathcal{Z}_{P}$ 
is a simply-connected space of finite type, 
$\pi_{\ell}(\mathcal{Z}_{P})$ is a finitely generated abelian group for $\ell\geq 2$, 
so $\pi_{\ell}(\mathcal{Z}_{P})\cong\pi_{\ell}^{\mbox{\scriptsize{Free}}}(\mathcal{Z}_{P})\oplus\pi_{\ell}^{\mbox{\scriptsize{Tor}}}(\mathcal{Z}_{P})$, where 
$\pi_{\ell}^{\mbox{\scriptsize{Free}}}(\mathcal{Z}_{P})$ is free abelian and $\pi_{\ell}^{\mbox{\scriptsize{Tor}}}(\mathcal{Z}_{P})$ is torsion. 
For any $k\geq 1$, let $\mathcal{P}_k$ be the set of all primes except those appearing in $\pi_{2j+1}^{\mbox{\scriptsize{Tor}}}(\mathcal{Z}_{P})$ for $1\leq j< k$ and let $R_{k}$ be $\Z_{\mathcal{P}_k}$.
We set $\mathcal{P}_1$ to be the set of all primes and $R_1=\Z$ by convention.
We use 
the phrase $R_{k}$-cohomology to refer to cohomology with coefficients in $R_{k}$.


By abuse of notation, we use the same notation for the attaching map $f_k$ (see~\eqref{skelcofib}) induced after~$\mathcal{P}_k$-localization.

\begin{proposition} 
   \label{keyprop}
   Suppose for a fixed $1\leq k<n$ that there is a homotopy equivalence 
   \[\Phi_{2k}\colon\namedright{M_{\mathcal{P}_k}(2k)}{}{N_{\mathcal{P}_k}(2k)}\] 
   such that there is a homotopy $(\Phi_{2k})_{0}\simeq e_{2k}$. Localize away from all primes 
   appearing in $\pi_{2k+1}^{\mbox{\scriptsize{Tor}}}(\mathcal{Z}_{P})$. Then the composite 
   \[\lnamedddright{\bigvee_{s=1}^{t_{k}} S^{2k+1}_{\mathcal{P}_{k+1}}}{f_{k}}{M_{\mathcal{P}_{k+1}}(2k)}{\Phi_{2k}}{N_{\mathcal{P}_{k+1}}(2k)} 
          {j_{k}(N)}{N_{\mathcal{P}_{k+1}}(2k+2)}\] 
   is null homotopic. 
\end{proposition}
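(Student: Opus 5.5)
The plan is to show that the composite $g=j_k(N)\circ\Phi_{2k}\circ f_k$ is null homotopic by first proving that it is null after composing with the inclusion into $N_{\mathcal{P}_{k+1}}$, then pulling this back to the skeleton, and finally handling the torsion part of the relevant homotopy group.

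\emph{Step 1: $\Phi_{2k}\circ f_k$ is rationally trivial in the right sense.} Rationalize $g$. Since $(\Phi_{2k})_0\simeq e_{2k}$ and rationalization is natural, $g_0\simeq j_k(N)_0\circ e_{2k}\circ (f_k)_0$. The homotopy commutative ladder~\eqref{eladder 2} of Lemma~\ref{lem:restriction} gives $j_k(N)_0\circ e_{2k}\simeq e_{2k+2}\circ j_k(M)_0$. Hence
\[
g_0\simeq e_{2k+2}\circ j_k(M)_0\circ (f_k)_0 .
\]
This is null because $j_k(M)\circ f_k$ is a cofibration composite. Therefore $g$ lies in the kernel of rationalization on $\pi_{2k+1}(N_{\mathcal{P}_{k+1}}(2k+2))^{\oplus t_k}$, so each wedge component of $g$ is a torsion element.

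\emph{Step 2: identify the target group.} By Lemma~\ref{skelconnect}, applied after $\mathcal{P}_{k+1}$-localization with $X=S^{2k+1}$ (which has dimension $2k+1\le 2k+2$), the map $\pi_{2k+1}(N_{\mathcal{P}_{k+1}}(2k+2))\to\pi_{2k+1}(N_{\mathcal{P}_{k+1}})$ has trivial kernel. The argument there shows that the fibre is $(2k+2)$-connected, so this map is in fact injective, being a homomorphism of groups. For $2k+1>2$ we have $\pi_{2k+1}(N)\cong\pi_{2k+1}(\mathcal{Z}_P)$. After localizing away from the primes in $\pi^{\mathrm{Tor}}_{2k+1}(\mathcal{Z}_P)$, this group becomes torsion free. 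Here I must check that $\mathcal{P}_{k+1}$-localization commutes with passing to $\pi_*$ and to skeleta; this follows from Lemma~\ref{lem:CW0 filtration}(2).

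\emph{Step 3: conclude.} Each component of $g$ is a torsion element of a group that injects into a torsion-free group, so it is zero. Hence $g$ is null homotopic.

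The main obstacle I expect is Step 2. I need to verify that the torsion of $\pi_{2k+1}(N_{\mathcal{P}_{k+1}}(2k+2))$ injects into $\pi_{2k+1}(N_{\mathcal{P}_{k+1}})$. In the local setting, the $\mathcal{P}_{k+1}$-local skeleta must still satisfy the connectivity statement of Lemma~\ref{skelconnect}. I would argue this through the relative CW structure of Lemma~\ref{lem:CW0 filtration}(4), using that $N$ has no odd cells, so the cofibre of $N_{\mathcal{P}_{k+1}}(2k+2)\to N_{\mathcal{P}_{k+1}}$ is $(2k+3)$-connected.

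A second point to handle carefully is that $\Phi_{2k}$ is defined on the $\mathcal{P}_k$-local skeleton. I would replace it by its $\mathcal{P}_{k+1}$-localization, as allowed at the end of Section~\ref{sec:Plocalization}, noting that $\mathcal{P}_{k+1}\subset\mathcal{P}_k$.
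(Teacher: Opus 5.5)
Your proof is correct, but it takes a more direct route than the paper. The paper uses Lemma~\ref{skelconnect} to push the composite into $N_{\mathcal{P}_{k+1}}$. It then lifts it through the fibration $\mathcal{Z}_P\to N\to BT^{m-n}$ to a map $\lambda$ into $(\mathcal{Z}_P)_{\mathcal{P}_{k+1}}$. Next it shows $\lambda$ is rationally trivial by lifting it rationally through the connecting map $\partial$ and invoking Lemma~\ref{loopMsplit}. Finally it uses that $\pi_{2k+1}((\mathcal{Z}_P)_{\mathcal{P}_{k+1}})$ is $\Z_{\mathcal{P}_{k+1}}$-free.

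You instead show on the skeleton that the class is rationally zero, hence torsion. You then kill it by embedding $\pi_{2k+1}(N_{\mathcal{P}_{k+1}}(2k+2))$ into $\pi_{2k+1}(N_{\mathcal{P}_{k+1}})\cong\pi_{2k+1}(\mathcal{Z}_P)\otimes\Z_{\mathcal{P}_{k+1}}$, which is torsion free. This makes clear that neither the lift $\lambda$ nor Lemma~\ref{loopMsplit} is needed. Because $BT^{m-n}$ is a $K(\Z^{m-n},2)$, the long exact sequence already gives $\pi_{2k+1}(\mathcal{Z}_P)\cong\pi_{2k+1}(N)$ for $k\geq 1$; the paper records this just before the proposition. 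Moreover, the rational lift through $\partial$ in the paper's argument is a map from a wedge of $(2k+1)$-spheres into $T^{m-n}_0$, which is null for degree reasons anyway.

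Both proofs rest on the same three inputs:
\begin{enumerate}
\item rational nullity from the ladder of Lemma~\ref{lem:restriction};
\item injectivity of the skeletal inclusion on $\pi_{2k+1}$;
\item torsion-freeness after inverting the primes of $\pi_{2k+1}^{\mathrm{Tor}}(\mathcal{Z}_P)$.
\end{enumerate}
Yours is simply the leaner packaging.

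The obstacle you flag in Step 2 is harmless. Since $N(2k+2)$ is the $(2k+3)$-skeleton of $N$, the inclusion is an isomorphism on $\pi_{2k+1}$ integrally. Tensoring with the flat ring $\Z_{\mathcal{P}_{k+1}}$, or using parts (2)--(4) of Lemma~\ref{lem:CW0 filtration} as you suggest, transfers this to the local skeleta. The paper uses this local version without comment as well.
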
 

\begin{proof} 
By Lemma~\ref{skelconnect}, the skeletal inclusion 
\(\namedright{N(2k+2)}{}{N}\) 
induces a monomorphism on $\pi_{2k+1}$. 
Thus it suffices to show that the composite
\[\lnamedddright{\bigvee_{s=1}^{t_{k}} S^{2k+1}_{\mathcal{P}_{k+1}}}{f_{k}}{M_{\mathcal{P}_{k+1}}(2k)}{\Phi_{2k}}{N_{\mathcal{P}_{k+1}}(2k)}{I}{N_{\mathcal{P}_{k+1}}}\] 
is null homotopic, where $I$ is the $\mathcal{P}_{k+1}$-localization of the skeletal inclusion. By~(\ref{ZkMfib}) there is a homotopy fibration sequence 
\[\namedddright{T^{m-n}}{\partial}{\mathcal{Z}_{P}}{}{N}{}{BT^{m-n}}\] 
where $\partial$ is the fibration connecting map. 

First, observe that the composite 
\[\lnamedddright{\bigvee_{s=1}^{t_{k}} S^{2k+1}_{\mathcal{P}_{k+1}}}{f_k}{M_{\mathcal{P}_{k+1}}(2k)}{\Phi_{2k}}{N_{\mathcal{P}_{k+1}}(2k)}{I}{N_{\mathcal{P}_{k+1}}}\longrightarrow BT^{m-n}_{\mathcal{P}_{k+1}}\] 
is null homotopic since it represents a class in $\bigoplus_{s=1}^{t_{k}}\pi_{2k+1}(BT^{m-n}_{\mathcal{P}_{k+1}})$, but 
$BT^{m-n}_{\mathcal{P}_{k+1}}$ is the Eilenberg-Mac Lane space $K(\mathbb{Z}^{m-n}_{\mathcal{P}_{k+1}},2)$, implying that $\pi_{2k+1}(BT^{m-n}_{\mathcal{P}_{k+1}})\cong 0$. Therefore, there is a lift 
\begin{equation} 
  \label{lambdalift} 
  \diagram 
     & \displaystyle\bigvee_{s=1}^{t_{k}} S^{2k+1}_{\mathcal{P}_{k+1}}\dto^{I\circ\Phi_{2k}\circ f_{k}}\dlto_{\lambda} \\ 
     (\mathcal{Z}_{P})_{\mathcal{P}_{k+1}}\rto & N_{\mathcal{P}_{k+1}} 
  \enddiagram 
\end{equation} 
for some map $\lambda$. 

Second, observe that $I\circ\Phi_{2k}\circ f_{k}$ 
is rationally null homotopic. For the homotopy commutativity of~\eqref{eladder 2} implies that  $j_{k}(N)\circ\Phi_{2k}$ is rationally homotopic to 
$j_k(N)\circ e_{2k}\simeq e_{2k+2}\circ j_{k}(M)$. As the map~$I$ factors through $j_{k}(N)$, this implies that 
$I\circ\Phi_{2k}$ rationally factors through $e_{2k+2}\circ j_{k}(M)$. Thus the composite $I\circ\Phi_{2k}\circ f_{k}$ 
rationally factors through $e_{2k+2}\circ j_{k}(M)\circ f_{k}$, which is null homotopic since $f_{k}$ 
and $j_{k}(M)$ are consecutive maps in a homotopy cofibration~\eqref{skelcofib}. Hence $I\circ\Phi_{2k}\circ f_{k}$ 
is rationally null homotopic. The homotopy commutativity of~(\ref{lambdalift}) therefore implies that the composite 
\(\nameddright{\bigvee_{s=1}^{t_{k}} S^{2k+1}_{0}}{\lambda}{(\mathcal{Z}_{P})_{0}}{}{N_{0}}\) 
is rationally null homotopic. Thus, rationally, there is a lift 
\[\diagram 
     & \displaystyle\bigvee_{s=1}^{t_{k}} S^{2k+1}_{0}\dto^{\lambda}\dlto &  \\ 
     T^{m-n}_{0}\rto^-{\partial} & {(\mathcal{Z}_{P})_{0}}\rto & N_{0}. 
  \enddiagram\] 

Third, observe that $\lambda$ is null homotopic. 
By Lemma~\ref{loopMsplit}, the fibration connecting map $\partial$ is null homotopic, so the rational lift of $\lambda$ through $\partial$ implies that $\lambda$ is rationally null homotopic. 
By hypothesis, we have localized away from all primes appearing in 
$\pi_{2k+1}^{\mbox{\scriptsize{Tor}}}(\mathcal{Z}_P)$, so $\pi_{2k+1}((\mathcal{Z}_P)_{\mathcal{P}_{k+1}})$ is a $\Z_{\mathcal{P}_{k+1}}$-free module. Hence $\lambda$ is rationally null homotopic if and only if it is null homotopic. 

Finally, the homotopy commutativity of~(\ref{lambdalift}) implies that the composite 
$I\circ\Phi_{2k}\circ f_{k}$ is also null homotopic. 
\end{proof}

\begin{corollary} 
   \label{keypropcor} 
   Suppose that for a fixed $1\leq k<n$ there is a $\mathcal{P}_{k}$-local homotopy equivalence 
   \[\Phi_{2k}\colon\namedright{M_{\mathcal{P}_k}(2k)}{}{N_{\mathcal{P}_k}(2k)}\] 
   such that $\Phi_{2k}^{\ast}$ equals $\gamma$ (see \eqref{eq_gamma_integral_iso}) in $R_{k}$-cohomology in 
   degrees~$\leq 2k$ and 
   $(\Phi_{2k})_0\simeq e_{2k}$. Then after 
   inverting the primes in $\pi_{2k+1}^{\mbox{\scriptsize{Tor}}}(\mathcal{Z}_{P})$ the composite 
   \(\nameddright{M_{\mathcal{P}_{k+1}}(2k)}{\Phi_{2k}}{N_{\mathcal{P}_{k+1}}(2k)}{j_{k}(N)}{N_{\mathcal{P}_{k+1}}(2k+2)}\) 
   extends across $j_{k}(M)$ to give a $\mathcal{P}_{k+1}$-local homotopy commutative diagram 
   \begin{equation} 
     \label{skeldgrm} 
     \diagram 
         M_{\mathcal{P}_{k+1}}(2k)\rto^-{j_{k}(M)}\dto^{\Phi_{2k}} & M_{\mathcal{P}_{k+1}}(2k+2)\dto^{\Phi_{2k+2}} \\ 
         N_{\mathcal{P}_{k+1}}(2k)\rto^-{j_{k}(N)} & N_{\mathcal{P}_{k+1}}(2k+2)  
      \enddiagram 
    \end{equation} 
   where $\Phi_{2k+2}$ is an $\mathcal{P}_{k+1}$-local homotopy equivalence such that $\Phi_{2k+2}^{\ast}$ equals~$\gamma$ in $R_{k+1}$-cohomology in  degrees~$\leq 2k+2$.  
   
   Further, if $\pi_{2k+2}(\mathcal{Z}_P)\otimes\mathbb{Q}=0$ 
   then there is a homotopy $(\Phi_{2k+2})_0\simeq e_{2k+2}$. 
\end{corollary}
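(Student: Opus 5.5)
The plan is to build $\Phi_{2k+2}$ by extending over the top cells, using Proposition~\ref{keyprop}. After localizing $\Phi_{2k}$ further at $\mathcal{P}_{k+1}\subset\mathcal{P}_k$, it is still a homotopy equivalence, and the hypothesis $(\Phi_{2k})_0\simeq e_{2k}$ persists. Proposition~\ref{keyprop} then says that $j_k(N)\circ\Phi_{2k}\circ f_k$ is null homotopic. Since $M_{\mathcal{P}_{k+1}}(2k+2)$ is the cofibre of $f_k$, the composite $j_k(N)\circ\Phi_{2k}$ extends across $j_k(M)$ to some map $\Phi_{2k+2}$, giving the homotopy commutative square~\eqref{skeldgrm}.

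The extension is only unique up to the coaction of $[\bigvee_s S^{2k+2}_{\mathcal{P}_{k+1}},N_{\mathcal{P}_{k+1}}(2k+2)]$ via $\delta_k$, and I will use this freedom to fix cohomology. Take $R_{k+1}$-cohomology of the map of cofibration sequences
\[
M(2k)\to M(2k+2)\to \bigvee S^{2k+2}, \qquad N(2k)\to N(2k+2)\to \bigvee S^{2k+2}.
\]
Because all cells are even, the restriction map from $H^*(M_{\mathcal{P}_{k+1}}(2k+2))$ to $H^*(M_{\mathcal{P}_{k+1}}(2k))$ is an isomorphism in degrees $\le 2k$. Moreover, $H^{2k+2}(M(2k+2))\cong H^{2k+2}(M)$, and similarly for $N$. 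So in degrees $\le 2k$, $\Phi_{2k+2}^*$ agrees with $\Phi_{2k}^*=\gamma$ by naturality.

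In degree $2k+2$, consider the difference $D=\Phi_{2k+2}^*-\gamma$. Both maps restrict compatibly to degree $\le 2k$, so I expect $D$ to factor as $\delta_k^*$ composed with a homomorphism $H^{2k+2}(N)\to H^{2k+2}(\bigvee S^{2k+2})$. Any homomorphism $H^{2k+2}(N_{\mathcal{P}_{k+1}}(2k+2))\to R_{k+1}^{t_k}$ is realized by a map $\bigvee S^{2k+2}_{\mathcal{P}_{k+1}}\to N_{\mathcal{P}_{k+1}}(2k+2)$. This should follow via Hurewicz, or by mapping $\bigvee S^{2k+2}$ into the top cells. Coacting $\Phi_{2k+2}$ by such a map changes $\Phi_{2k+2}^*$ in degree $2k+2$ by exactly $\delta_k^*$ composed with that homomorphism, so a suitable choice corrects $\Phi_{2k+2}^*$ to equal $\gamma$.

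\textbf{Main obstacle.} The hard part is making this correction step rigorous. The difference must genuinely factor through $\delta_k^*$, which uses $D\circ j_k^*=0$ together with exactness of the cohomology sequence. Realizability requires the Hurewicz map $\pi_{2k+2}(N_{\mathcal{P}_{k+1}}(2k+2))\to H_{2k+2}$ to hit the needed classes. If that fails, I would instead use the null-homotopy freedom: choosing a different null homotopy of $j_k(N)\Phi_{2k}f_k$ changes the extension by elements of $\pi_{2k+2}$.

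Once $\Phi_{2k+2}^*=\gamma$ in all degrees $\le 2k+2$, $\Phi_{2k+2}$ induces an isomorphism on $R_{k+1}$-cohomology, since $\gamma$ is an isomorphism. Both spaces are simply connected, $\mathcal{P}_{k+1}$-local, with free finitely generated homology, so Whitehead's theorem makes $\Phi_{2k+2}$ a homotopy equivalence.

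For the last statement, assume $\pi_{2k+2}(\mathcal{Z}_P)\otimes\Q=0$. Compose~\eqref{skeldgrm} with $\varphi_{2k+2}$. Then $h'=(\Phi_{2k+2})_0$ is an extension of $\varphi_{2k+2}\circ j_k(N)\circ\Phi_{2k}$ across $\varphi_{2k+2}\circ j_k(M)$, and $(\Phi_{2k})_0\simeq e_{2k}$. Lemma~\ref{rationalextension} then gives $(\Phi_{2k+2})_0\simeq e_{2k+2}$.
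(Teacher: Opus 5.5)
\textbf{Gap in degree $2k+2$.} Your extension step, the Whitehead argument and the final rational statement via Lemma~\ref{rationalextension} are fine and match the paper. The problem is the correction you propose in degree $2k+2$, because its realizability claim is false.

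For $k\ge 1$, every class in $H^{2k+2}(N_{\mathcal{P}_{k+1}}(2k+2);R_{k+1})$ is a sum of products of degree-$2$ classes. This holds because restriction from $N$ is a surjective ring map and $H^*(N)$ is generated in degree~$2$. Hence for any map $\alpha\colon\bigvee S^{2k+2}_{\mathcal{P}_{k+1}}\to N_{\mathcal{P}_{k+1}}(2k+2)$ we get $\alpha^*=0$ in degree $2k+2$, since $H^2$ of a wedge of $(2k+2)$-spheres vanishes. Equivalently, the Hurewicz map is zero in that degree; for example, $\pi_4(\mathbb{C}P^2)\cong\pi_4(S^5)=0$.

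So only the zero homomorphism is realizable, and coacting never changes $\Phi_{2k+2}^*$ in degree $2k+2$. Your fallback of changing the null homotopy is the same operation (both act through $\pi_{2k+2}(N_{\mathcal{P}_{k+1}}(2k+2))$) and fails for the same reason. As written, nothing in your argument shows that $D=\Phi_{2k+2}^*-\gamma$ vanishes.

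\textbf{Missing idea.} The observation that makes any correction unnecessary is this. $\Phi_{2k+2}^*$ is a ring homomorphism that agrees with $\gamma$ in degree $2$, by naturality along $j_k$; this uses $k\ge 1$, so that $j_k^*$ is an isomorphism in degree~$2$. Moreover, $H^{*}(N_{\mathcal{P}_{k+1}}(2k+2);R_{k+1})$ is generated by degree-$2$ classes. Therefore
$\Phi_{2k+2}^*(x_{i_1}\cdots x_{i_{k+1}})=\gamma(x_{i_1})\cdots\gamma(x_{i_{k+1}})=\gamma(x_{i_1}\cdots x_{i_{k+1}})$ on every monomial. So $D=0$ automatically for every extension, which is how the paper argues. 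With this in place, the rest of your proof goes through.
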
 

\begin{proof} 
The existence of the extension follows from the null homotopy in Proposition~\ref{keyprop}. 
By hypothesis, $\Phi_{2k}^{\ast}$ equals $\gamma$ in $R_{k}$-cohomology in 
degrees~$\leq 2k$. 
Recall that the integral cohomology ring of $M$ is generated by elements in degree $2$. Hence, each skeleton  of the $CW$ structure discussed in Section~\ref{sec:prelim} on $M$ has the same property, and the same is therefore true in $R_{k}$-cohomology.  
As~$k\geq 1$, this implies that $\Phi_{2k}$ 
has the same image on generators in $R_{k}$-cohomology as $\gamma$ does. The existence 
of the homotopy commutative square~(\ref{skeldgrm}) after further localizing away from the 
primes in $\pi_{2k+1}^{\mbox{\scriptsize{Tor}}}(\mathcal{Z}_{P})$ then implies that $\Phi_{2k+2}^{\ast}$ 
has the same image on generators in $R_{k+1}$-cohomology as $\gamma$. 

Since $\Phi_{2k+2}^{\ast}$  
and $\gamma$ are ring homomorphisms, we obtain 
$\Phi_{2k+2}^{\ast}(x_{i_{1}}\cdots x_{i_{\ell}})=\gamma(x_{i_{1}}\cdots x_{i_{\ell}})$ 
for all monomials $x_{i_{1}}\cdots x_{i_{\ell}}$ of length $\ell\leq k+1$ formed from 
degree~$2$ generators $x_{i_{1}},\ldots,x_{i_{\ell}}$. Since the~$R_{k+1}$-cohomologies of $M$ and $N$ 
are generated by degree $2$ elements, it follows that $\Phi_{2k+2}^{\ast}$ equals~
$\gamma$ in~$R_{k+1}$-cohomology in degrees~$\leq 2k+2$. As $\gamma$ is an isomorphism 
in integral cohomology, it is also an isomorphism in $R_{k+1}$-cohomology. Therefore, 
$\Phi_{2k+2}^{\ast}$ is an isomorphism in $R_{k+1}$-cohomology in degrees~$\leq 2k+2$. 
But as $H^{>2k+2}(M(2k+2);R_{k+1})\cong H^{>2k+2}(M(2k+2);R_{k+1})\cong0$,
the induced map~$\Phi_{2k+2}^{\ast}$ is therefore an isomorphism 
in $R_{k+1}$-cohomology in all degrees. 
As the~$R_{k+1}$-cohomologies of $M(2k+2)$ and $N(2k+2)$ are  
concentrated in even degrees, the universal coefficient theorem implies that $R_{k+1}$-cohomology and 
$R_{k+1}$-homology are dual, so $\Phi_{2k+2}$ induces an isomorphism in $R_{k+1}$-homology. 
Hence $\Phi_{2k+2}$ is a $\mathcal{P}_{k+1}$-local homotopy equivalence by Whitehead's Theorem. 

Further, if $\pi_{2k+2}(\mathcal{Z}_P)\otimes\mathbb{Q}=0$ then  
Lemma~\ref{rationalextension} applied to~(\ref{skeldgrm}) implies that
$(\Phi_{2k+2})_0\simeq e_{2k+2}$.  
\end{proof} 

Corollary~\ref{keypropcor} leads to an inductive argument producing a local homotopy 
equivalence $M_{\mathcal{P}_n}\simeq N_{\mathcal{P}_n}$. 

\begin{theorem} 
   \label{induction} 
Let $N$ be a $2n$-dimensional quasitoric manifold associated with a simple polytope $P$ such that each $\pi_{2k+2}(\mathcal{Z}_P)\otimes\mathbb{Q}=0$
   for $1\leq k<n-1$. If $M$ is a quasitoric manifold with a graded ring isomorphism $H^{\ast}(M;\Z)\cong H^{\ast}(N;\Z)$,
then there is a $\mathcal{P}_{n}$-local homotopy equivalence~$M_{\mathcal{P}_n}\simeq N_{\mathcal{P}_n}$. 
\end{theorem}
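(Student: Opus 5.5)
The proof will be an induction on $k$, from $k=1$ up to $k=n$, using Lemma~\ref{skel2QZ} as the base case and Corollary~\ref{keypropcor} as the inductive step. Fix the integral isomorphism $\gamma$ of~\eqref{eq_gamma_integral_iso}, the rational equivalence $e\colon M_0\to N_0$ from Proposition~\ref{lemma realizing Gamma}, and its skeletal restrictions $e_{2k}$ from Lemma~\ref{lem:restriction}. The inductive statement for $1\leq k\leq n$ is:
\begin{quote}
there is a $\mathcal{P}_k$-local homotopy equivalence $\Phi_{2k}\colon M_{\mathcal{P}_k}(2k)\to N_{\mathcal{P}_k}(2k)$ such that $\Phi_{2k}^{\ast}$ equals $\gamma$ in $R_k$-cohomology in degrees $\leq 2k$, and, when $k\leq n-1$, such that $(\Phi_{2k})_0\simeq e_{2k}$.
\end{quote}

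\textbf{Base case.} For $k=1$ we have $\mathcal{P}_1$ equal to all primes, so $M_{\mathcal{P}_1}(2)=M(2)$. Lemma~\ref{skel2QZ} gives $\Phi_2$ with $(\Phi_2)_0\simeq e_2$. The proof of Lemma~\ref{skel2QZ} takes $\Phi_2=\phi$ from Proposition~\ref{skel2rigidity}, so $\Phi_2^{\ast}$ agrees with $\gamma$ on degree~$2$ classes. Since $H^{\ast}(M(2))$ is concentrated in degree~$2$, this gives agreement in all degrees $\leq 2$.

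\textbf{Inductive step.} Assume the statement for some $k$ with $1\leq k<n$. Corollary~\ref{keypropcor} needs two inputs: $\Phi_{2k}$ restricted after localizing to $\mathcal{P}_{k+1}\subseteq\mathcal{P}_k$, and the condition $(\Phi_{2k})_0\simeq e_{2k}$.
\begin{itemize}
\item The localization is compatible with the restriction from $\mathcal{P}_k$ to $\mathcal{P}_{k+1}$, by the discussion at the end of Section~\ref{sec:Plocalization}. So $\Phi_{2k}$ induces a $\mathcal{P}_{k+1}$-local equivalence that still agrees with $\gamma$ in $R_{k+1}$-cohomology, because $R_{k+1}$ is a localization of $R_k$.
\item The corollary also needs the correct rationalization. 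This is exactly the $k\leq n-1$ part of the inductive hypothesis.
\end{itemize}
Since $\mathcal{P}_{k+1}$ is obtained from $\mathcal{P}_k$ by removing the primes in $\pi_{2k+1}^{\mathrm{Tor}}(\mathcal{Z}_P)$, Corollary~\ref{keypropcor} produces $\Phi_{2k+2}$ as a $\mathcal{P}_{k+1}$-local equivalence agreeing with $\gamma$ in degrees $\leq 2k+2$.

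To continue the induction we also need $(\Phi_{2k+2})_0\simeq e_{2k+2}$, but only when $k+1\leq n-1$, that is, $k<n-1$. By the final clause of Corollary~\ref{keypropcor}, this follows from the hypothesis $\pi_{2k+2}(\mathcal{Z}_P)\otimes\Q=0$, which the theorem assumes precisely for $1\leq k<n-1$. At the last step $k=n-1$, the rational compatibility is not needed, since no further extension is performed. This explains why the hypothesis omits $k=n-1$.

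\textbf{Conclusion and main obstacle.} Taking $k=n$ gives a $\mathcal{P}_n$-local equivalence $\Phi_{2n}\colon M_{\mathcal{P}_n}(2n)\to N_{\mathcal{P}_n}(2n)$. Since $M_{\mathcal{P}_n}(2n)=M_{\mathcal{P}_n}$ and $N_{\mathcal{P}_n}(2n)=N_{\mathcal{P}_n}$ (noted before Lemma~\ref{lem:restriction}), this proves the theorem. The main point needing care is the bookkeeping of localizations in the inductive step: after passing from $\mathcal{P}_k$ to $\mathcal{P}_{k+1}$, the localized $\Phi_{2k}$ must still rationalize to $e_{2k}$. This holds because $(\Phi_{2k})_{\mathcal{P}_{k+1}}$ and $\Phi_{2k}$ have the same rationalization, by the uniqueness of induced maps on cellular localizations. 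I would state this explicitly so that the induction closes cleanly.
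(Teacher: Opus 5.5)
Your proposal is correct and is essentially the paper's proof: an induction on skeletons with Lemma~\ref{skel2QZ} as the base case and Corollary~\ref{keypropcor} as the inductive step, stopping at $k=n$ where $M_{\mathcal{P}_n}(2n)=M_{\mathcal{P}_n}$ and $N_{\mathcal{P}_n}(2n)=N_{\mathcal{P}_n}$. Your extra bookkeeping makes explicit what the paper leaves implicit: that $\Phi_2^{\ast}$ agrees with $\gamma$ in degree~$2$ via Proposition~\ref{skel2rigidity}, and that $(\Phi_{2k})_0\simeq e_{2k}$ is only needed for $k\leq n-1$, so $\pi_{2k+2}(\mathcal{Z}_P)\otimes\Q=0$ is only needed for $k<n-1$; this accounting of the omitted index is more accurate than the paper's closing remark about $\pi_{2n+2}$.
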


\begin{proof} 
First observe that the dimension of $M$ is $2n$ by hypotheses.
The proof proceeds by induction on skeletons. 
The base case is the integral homotopy equivalence 
\(\namedright{M(2)}{\Phi_{2}}{N(2)}\) 
in Lemma~\ref{skel2QZ} such that $(\Phi_2)_0\simeq e_2$. Note that $\mathcal{P}_{1}$ is the set of 
all primes so $R_{1}=\mathbb{Z}$, implying that 
$M(2)=M_{\mathcal{P}_{1}}(2)$, $N(2)=N_{\mathcal{P}_{1}}(2)$, 
and $\Phi_{2}$ can be viewed as a $\mathcal{P}_{1}$-local homotopy equivalence 
\(\namedright{M_{\mathcal{P}_{1}}(2)}{\Phi_{2}}{N_{\mathcal{P}_{1}}(2)}\). 
The inductive step is given by 
Corollary~\ref{keypropcor}. The induction stops once the $\mathcal{P}_{n}$-local 
homotopy equivalence~
\(\lnamedright{M_{\mathcal{P}_{n}}(2n)}{\Phi_{2n}}{N_{\mathcal{P}_{n}}(2n)}\) 
has been constructed. As 
$M_{\mathcal{P}_{n}}(2n)\simeq M_{\mathcal{P}_{n}}$ and $N_{\mathcal{P}_{n}}(2n)\simeq N_{\mathcal{P}_{n}}$, we obtain a $\mathcal{P}_{n}$-local homotopy 
equivalence $M_{\mathcal{P}_{n}}\simeq N_{\mathcal{P}_{n}}$.

Note that the condition $\pi_{2k+2}(\mathcal{Z}_{P})\otimes\mathbb{Q}=0$ in Corollary~\ref{keypropcor} is used in the $(k+1)$-step of the induction, so as the induction stops when $k=n$, the condition $\pi_{2n+2}(\mathcal{Z}_{P})\otimes\mathbb{Q}=0$ is not needed, which explains why this is omitted from the list of hypotheses.
\end{proof} 

We now prove the main result.  

\begin{proof}[Proof of Theorem~\ref{main}] 
By hypothesis the simple polytope is $P=\prod_{i=1}^{\ell}\Delta^{d_{i}}$, where $d_{i}\geq 1$. In this case, by~\cite[Proposition 6.4]{BP} there is a 
diffeomorphism $\mathcal{Z}_{P}\cong\prod_{i=1}^{d} S^{2d_{i}+1}$. Since an odd dimensional sphere $S^{2\ell+1}$ is rationally the Eilenberg-MacLane space 
$K(\mathbb{Q},2\ell+1)$, we have $\pi_{2k+2}(\mathcal{Z}_{P})\otimes\mathbb{Q}= 0$ for all $k\geq 1$. 
Therefore, by Theorem~\ref{induction}, 
the graded ring isomorphism $H^{\ast}(M)\cong H^{\ast}(N)$ implies that there is an $R_{n}$-local 
homotopy equivalence $M_{\mathcal{P}_n}\simeq N_{\mathcal{P}_n}$. It remains to identify the primes that have been inverted in $R_{n}$. 

By definition, $d$ is the minimum of $\{d_{1},\ldots,d_{\ell}\}$. For any prime $p$, the least 
dimensional $p$-torsion class in $\pi_{\ast}(S^{2t+1})$ occurs in dimension $2t+2p-2$. Thus if the 
least dimensional sphere in the product decomposition for $\mathcal{Z}_{P}$ is $S^{2d+1}$ and 
the highest dimension for which a prime in $\pi_{2k+1}^{\mbox{\scriptsize Tor}}(\mathcal{Z}_{P})$ 
must be inverted is $k=n-1$ then we need to localize away from all primes satisfying 
$2d+2p-2\leq 2n-1$, implying that $p\leq n-d$. That is, all primes $p\leq n-d$ have been 
inverted in $R_{n}$. 
\end{proof}

\newcommand{\etalchar}[1]{$^{#1}$}

\end{document}